\renewcommand{\uppercasenonmath}[1]{}
\numberwithin{equation}{section} \theoremstyle{plain}
\newtheorem*{thm*}{Main Theorem}
\newtheorem{thm}{Theorem}[section]
\newtheorem{cor}[thm]{Corollary}
\newtheorem*{cor*}{Corollary}
\newtheorem{lem}[thm]{Lemma}
\newtheorem*{lem*}{Lemma}
\newtheorem*{fact*}{Fact}
\newtheorem*{nota*}{Notation}
\newtheorem{prop}[thm]{Proposition}
\newtheorem*{prop*}{Proposition}
\newtheorem{rem}[thm]{Remark}
\newtheorem*{rem*}{Remark}
\newtheorem*{observation*}{Observation}
\newtheorem{exa}[thm]{Example}
\newtheorem*{exa*}{Example}
\newtheorem{df}[thm]{Definition}
\newtheorem*{df*}{Definition}
\newtheorem*{conj*}{Conjecture}
\renewcommand{\geq}{\geqslant}
\renewcommand{\leq}{\leqslant}
\begin{document}
\begin{center}
{\large  \bf The $\mathfrak{a}$-Filter grade of an ideal $\mathfrak{b}$ and $(\mathfrak{a},\mathfrak{b})$-$\mathrm{f}$-modules}

\vspace{0.3cm} Jingwen Shen, Xiaoyan Yang  \\
Department of Mathematics, Northwest Normal University, Lanzhou 730070,
China\\
E-mails: shenjw0609@163.com, yangxy@nwnu.edu.cn
\end{center}
\bigskip
\centerline { \bf  Abstract} Let $\mathfrak{a},\mathfrak{b}$ be two ideals of a commutative noetherian ring $R$ and $M$ a finitely generated $R$-module.~We continue to study $\textrm{f}\textrm{-}\mathrm{grad}_R(\mathfrak{a},\mathfrak{b},M)$ which was introduced in [Bull. Malays. Math. Sci. Soc. 38 (2015) 467--482], some computations and bounds of $\textrm{f}\textrm{-}\mathrm{grad}_R(\mathfrak{a},\mathfrak{b},M)$ are provided.~We also give the structure of $(\mathfrak{a},\mathfrak{b})$-$\mathrm{f}$-modules,~various properties which are analogous to those of Cohen Macaulay modules are discovered.
\leftskip10truemm \rightskip10truemm \noindent \\
\vbox to 0.3cm{}\\
{\it Key Words:} $\mathfrak{a}$-filter regular sequence; $\textrm{f}\textrm{-}\mathrm{grad}_R(\mathfrak{a},\mathfrak{b},M)$; $(\mathfrak{a},\mathfrak{b})$-$\mathrm{f}$-module\\
{\it 2020 Mathematics Subject Classification:} 13C15; 13H10

\leftskip0truemm \rightskip0truemm
\bigskip

\section{\bf Introduction and Preliminaries}

Depth is one of the most fundamental invariant of a noetherian local ring or a finitely generated module.~It is defined in terms of regular sequences, can be measured by the non-vanishing of $\mathrm{Ext}$ modules.~This quantity reflects the application of homological methods to commutative algebra.~Equally important is characterizations of depth in terms of the local cohomology and the Koszul homology.~These two invariants estimate the relationship between depth and dimension of rings or modules and hold an intermediate position between arithmetic and homological algebra.

Filter regular sequence,~which has been introduced in \cite{NS},~is a generalization of regular sequence.~It plays an important role in the theory of local cohomology and has appeared many papers in this area,~for instance \cite{CT,FTZ,GC,KS,LT}.  It is an interesting problem to determine if a given local cohomology module is artinian.~Let $(R,\mathfrak{m})$ be a local ring,~$\mathfrak{a}$ an ideal of $R$ and $M$ a finitely generated $R$-module.~L$\mathrm{\ddot{u}}$ and Tang \cite{LT} considered this problem and defined $\textrm{f}\textrm{-}\mathrm{depth}_R(\mathfrak{a},M)$ as the length of any maximal $M$-filter regular sequence in $\mathfrak{a}$ and proved that the following are equal to $\textrm{f}\textrm{-}\mathrm{depth}_R(\mathfrak{a},M)$,
\begin{center}$\begin{aligned}
&\bullet \mathrm{inf} \{r\in\mathbb{N}_{0}\hspace{0.03cm}|\hspace{0.03cm}\mathrm{dim}_{R}\mathrm{Ext}^{r}_{R}(R/\mathfrak{a},M)>0\}\\
&\bullet \mathrm{inf} \{r\in\mathbb{N}_{0}\hspace{0.03cm}|\hspace{0.03cm}\mathrm{H}^{r}_{\mathfrak{a}}(M)~\mathrm{is~not~Artinian}\}\\
&\bullet n-\mathrm{sup} \{r\in\mathbb{N}_{0}\hspace{0.03cm}|\hspace{0.03cm}\mathrm{dim}_{R}(\mathrm{H}_{i}(y_{1},\cdots,y_{n};M))>0\}
\end{aligned}$\end{center}
where $\mathrm{H}^{\ast}_{a}(M)$ is the $\mathfrak{a}$-local cohomology of $M$, $(y_{1},\cdots,y_{n})=\mathfrak{a}$ and $\mathrm{H}_{i}(y_{1},\cdots,y_{n};M)$ is the $i$-th homology module of the Koszul complex on $y_{1},\cdots,y_{n}$.

Let $\mathfrak{a},\mathfrak{b}$ be two ideals of $R$.~Fathi et al. \cite{FTZ} denoted the supremum of all numbers $n\in \mathbb{N}_{0}$ for which there exists an $\mathfrak{a}$-filter regular $M$-sequence of length $n$ in $\mathfrak{b}$ by $\textrm{f}\textrm{-}\mathrm{grad}_R(\mathfrak{a},\mathfrak{b},M)$.~If $(R,\mathfrak{m})$ is local,~then $\mathfrak{m}$-filter grade of $\mathfrak{b}$ on $M$ is exactly the f-depth of $\mathfrak{b}$ on $M$.~They generalized some results in \cite{LT} and characterized
$\textrm{f}\textrm{-}\mathrm{grad}_R(\mathfrak{a},\mathfrak{b},M)$ to non-local cases.

The natural question arises:~Do the three formulas yield the same invariant $\textrm{f}\textrm{-}\mathrm{grad}_R(\mathfrak{a},\mathfrak{b},M)$? In Section 2,~we give the affirmative answer for this question.~Most importantly,~we provide some computes of $\textrm{f}\textrm{-}\mathrm{grad}_R(\mathfrak{a},\mathfrak{b},M)$ through the depth of the localizations of $M$ at prime ideals of $R$,~and give some inequalities on $\textrm{f}\textrm{-}\mathrm{grad}_R(\mathfrak{a},\mathfrak{b},M)$.

Section 3 is devoted to define an $(\mathfrak{a},\mathfrak{b})$-$\mathrm{f}$-module using $\textrm{f}\textrm{-}\mathrm{grad}_R(\mathfrak{a},\mathfrak{b},M)$ which is similar to Cohen Macaulay modules.~We give relationships between $(\mathfrak{a},\mathfrak{b})$-$\mathrm{f}$-modules and Cohen Macaulay modules and some properties of $(\mathfrak{a},\mathfrak{b})$-$\mathrm{f}$-modules.

We assume throughout this paper that $R$ is a commutative
noetherian ring which is not necessarily local,~$\mathfrak{a}$ and $\mathfrak{b}$ are ideals of $R$,~all modules are finitely generated $R$-modules.

Next we recall some notions which we will need later.

{\bf Associated prime and support.} We write $\mathrm{Spec}R$ for the set of prime ideals of $R$.~For an ideal $\mathfrak{c}$ of $R$ and $\mathfrak{q}\in \mathrm{Spec}R$,~we set
\begin{center}
$\mathcal{V}(\mathfrak{c})=\{\mathfrak{p}\in \mathrm{Spec}R\hspace{0.03cm}|\hspace{0.03cm}\mathfrak{c}\subseteq \mathfrak{p}\}$,
$\mathcal{U}(\mathfrak{q})=\{\mathfrak{u}\in \mathrm{Spec}R\hspace{0.03cm}|\hspace{0.03cm}\mathfrak{u}\subseteq \mathfrak{q}\}$.
\end{center}
Let $M$ be an $R$-module.~The associated prime ideals of $M$,~denoted by $\mathrm{Ass}_{R}M$,~is the set
\begin{center}$\begin{aligned}
\mathrm{Ass}_{R}M
&= \{\mathfrak{p}\in \mathrm{Spec}R\hspace{0.03cm}|\hspace{0.03cm}\mathrm{there~exists}~x\in M~\mathrm{scuh~that}~ \mathrm{ann}x=\mathfrak{p}\}.
\end{aligned}$\end{center}
Fix $\mathfrak{p}\in \mathrm{Spec}R$,~let $M_{\mathfrak{p}}$ denote the localization of $M$ at $\mathfrak{p}$.~The ``large" support of $M$ is
\begin{center}$\begin{aligned}
\mathrm{Supp}_{R}M
&= \{\mathfrak{p}\in \mathrm{Spec}R\hspace{0.03cm}|\hspace{0.03cm}M_{\mathfrak{p}}\neq 0\}.
\end{aligned}$\end{center}
It is well known that $\mathrm{Ass}_{R}M\subseteq \mathrm{Supp}_{R}M$.

The (Krull) dimension of an $R$-module $M$ is
\begin{center}
$\mathrm{dim}_{R}M=\mathrm{sup}\{\mathrm{dim}_{R}R/\mathfrak{p}\hspace{0.03cm}|\hspace{0.03cm}\mathfrak{p}\in \mathrm{Supp}_{R}M\}$.
\end{center}

{\bf Filter regular sequence.} ~Let~$M$ be an $R$-module.~We say that a sequence $x_{1},\cdots,x_{n}~\mathrm{in}~R$ is an $\mathfrak{a}$-filter regular $M$-sequence,~if
$$\xymatrix{\mathrm{Supp}_R(\frac{(x_{1},\cdots,x_{i-1})M:_M x_{i}}{(x_{1},\cdots,x_{i-1})M}) \subseteq \mathcal{V}(\mathfrak{a})}$$
for all $i=1,\cdots,n$.~If $x_{1},\cdots,x_{n}$ belong to $\mathfrak{b}$,~then we say that $x_{1},\cdots,x_{n}$ is an $\mathfrak{a}$-filter regular $M$-sequence in $\mathfrak{b}$.~Note that $x_{1},\cdots,x_{n}$ is an $R$-filter regular $M$-sequence if and only if it is a weak $M$-regular sequence.

The following proposition is included in \cite[Proposition 2.1]{FTZ}.

\begin{prop}\label{prop:2.1}
Let $x_{1},\cdots,x_{n}$ be a sequence in $R$, $M$ an $R$-module and $n\in \mathbb{N}$.~The following are equivalent:

$\mathrm{(1)}$ $x_{1},\cdots,x_{n}$ is an $\mathfrak{a}$-filter regular $M$-sequence;

$\mathrm{(2)}$ $x_{i}\not\in \mathfrak{p}$ for all $\mathfrak{p}\in \mathrm{Ass}_{R}(M/(x_{1},\cdots,x_{i-1})M) \backslash \mathcal{V}(\mathfrak{a})$ and all $i=1,\cdots,n$;

$\mathrm{(3)}$ $x_{1}/1,\cdots,x_{n}/1$ is a weak $M_{\mathfrak{p}}$-regular sequence for all $\mathfrak{p}\in \mathrm{Supp}_{R}M\backslash  \mathcal{V}(\mathfrak{a})$.
\end{prop}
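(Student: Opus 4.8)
The plan is to prove the cycle of implications $(1)\Rightarrow(2)\Rightarrow(3)\Rightarrow(1)$, reducing everything to the classical fact that for a finitely generated module $N$ over a noetherian ring, a fixed element $x\in R$ is a nonzerodivisor on $N$ if and only if $x\notin\bigcup_{\p\in\ass_R N}\p$, together with the compatibility of associated primes, colon ideals, and support with localization. Throughout I would set $M_i = M/(x_1,\dots,x_{i-1})M$ so that the condition "$x_i$ is filter-regular on $M$ at stage $i$" reads $\supp_R\big((0:_{M_i}x_i)\big)\subseteq\mathcal V(\fa)$, and likewise $(M_i)_\p \cong M_\p/(x_1,\dots,x_{i-1})M_\p$ for any $\p$.

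\smallskip

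\noindent\textbf{$(1)\Rightarrow(2)$.} Fix $i$ and suppose $\p\in\ass_R(M_i)\setminus\mathcal V(\fa)$ but $x_i\in\p$. Since $\p\in\ass_R(M_i)$, there is $\bar m\in M_i$ with $\ann_R\bar m=\p$; then $x_i\bar m=0$, so $\bar m\in(0:_{M_i}x_i)$, and $\bar m\neq 0$ forces $\p\in\supp_R(0:_{M_i}x_i)$. But $\p\notin\mathcal V(\fa)$ contradicts (1). (I would also note the easy converse-free remark that $\supp_R(0:_{M_i}x_i)\setminus\mathcal V(\fa)$ is nonempty iff it contains an associated prime of $(0:_{M_i}x_i)$, hence of $M_i$, which is what makes (2) genuinely equivalent rather than merely implied.)

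\smallskip

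\noindent\textbf{$(2)\Rightarrow(3)$.} Let $\p\in\supp_R M\setminus\mathcal V(\fa)$; I must show $x_1/1,\dots,x_n/1$ is a weak $M_\p$-regular sequence, i.e. for each $i$, $x_i/1$ is a nonzerodivisor on $(M_i)_\p=M_\p/(x_1,\dots,x_{i-1})M_\p$. If it were a zerodivisor, then $x_i/1$ lies in some $\q R_\p\in\ass_{R_\p}\big((M_i)_\p\big)$, and by the standard bijection $\ass_{R_\p}\big((M_i)_\p\big)=\{\q R_\p : \q\in\ass_R(M_i),\ \q\subseteq\p\}$ we get $\q\in\ass_R(M_i)$ with $\q\subseteq\p$ and $x_i\in\q$. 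Since $\p\notin\mathcal V(\fa)$ and $\q\subseteq\p$, also $\q\notin\mathcal V(\fa)$, so $\q\in\ass_R(M_i)\setminus\mathcal V(\fa)$ with $x_i\in\q$, contradicting (2). (One should check $\ass_R(M_i)\neq\varnothing$ where needed, i.e. $M_i\neq 0$ at the relevant localization; if $(M_i)_\p=0$ there is nothing to prove at stage $i$.)

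\smallskip

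\noindent\textbf{$(3)\Rightarrow(1)$.} Fix $i$ and suppose $\p\in\supp_R(0:_{M_i}x_i)$; I must show $\fa\subseteq\p$. Assume not, so $\p\in\supp_R M\setminus\mathcal V(\fa)$ (note $\supp_R(0:_{M_i}x_i)\subseteq\supp_R M_i\subseteq\supp_R M$). Localizing, $0\neq(0:_{M_i}x_i)_\p=(0:_{(M_i)_\p}x_i/1)$, so $x_i/1$ is a zerodivisor on $(M_i)_\p=M_\p/(x_1,\dots,x_{i-1})M_\p$, contradicting that $x_1/1,\dots,x_n/1$ is a weak $M_\p$-regular sequence (which in particular needs $x_i/1$ regular on the $i$-th quotient). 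Hence $\p\in\mathcal V(\fa)$, giving $\supp_R(0:_{M_i}x_i)\subseteq\mathcal V(\fa)$, which is (1).

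\smallskip

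\noindent I do not expect a serious obstacle here: the proposition is essentially a bookkeeping exercise glueing together (a) "zerodivisor $\Leftrightarrow$ lies in an associated prime" and (b) the exactness of localization applied to colon submodules and associated primes. The one point demanding a little care is the handling of the trivial stages where a quotient $M_i$ or its localization vanishes — there "weak regular sequence" is vacuously satisfied and all the support sets are empty, so the implications hold trivially, but this should be stated explicitly so the arguments above are not vacuously misapplied. Since this is quoted as \cite[Proposition 2.1]{FTZ}, I would present the proof compactly in this cyclic form and refer the reader there for full details.
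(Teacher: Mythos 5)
The paper offers no proof of this proposition at all—it is quoted directly from \cite[Proposition 2.1]{FTZ}—so there is no in-paper argument to compare yours against. Your cyclic proof $(1)\Rightarrow(2)\Rightarrow(3)\Rightarrow(1)$ is correct and complete: each implication reduces properly to the characterization of zerodivisors via associated primes together with the compatibility of $\ass$, colon submodules, and support with localization, and you correctly flag and dispose of the degenerate stages where $(M_i)_{\p}=0$.
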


\bigskip
\section{\bf $\mathfrak{a}$-filter grade of an ideal $\mathfrak{b}$ on modules }

This section we consider whether $\textrm{f}\textrm{-}\mathrm{grad}_R(\mathfrak{a},\mathfrak{b},M)$ can be measured by Koszul complexes, The~computation and various inequalities on $\textrm{f}\textrm{-}\mathrm{grad}_R(\mathfrak{a},\mathfrak{b},M)$ are given.

We begin with the following definition in \cite{FTZ}.

\begin{df}\label{df:2.4}
\rm Let $M$ be an $R$-module. Suppose that $\mathrm{Supp}_{R}(M/\mathfrak{b}M)\not\subseteq \mathcal{V}(\mathfrak{a})$.~The $\mathfrak{a}$-filter grade of $\mathfrak{b}$ on $M$ is defined as the length of any maximal $\mathfrak{a}$-filter regular $M$-sequences in $\mathfrak{b}$,~denoted by $\textrm{f}\textrm{-}\mathrm{grad}_R(\mathfrak{a},\mathfrak{b},M)$.~Set $\textrm{f}\textrm{-}\mathrm{grad}_R(\mathfrak{a},\mathfrak{b},M)=\infty$ when $\mathrm{Supp}_{R}(M/\mathfrak{b}M)\subseteq \mathcal{V}(\mathfrak{a})$.
\end{df}

\begin{rem}\label{rem:2.5} \rm
$\mathrm{(1)}$ If $\mathfrak{a}=R$,~then $\textrm{f}\textrm{-}\mathrm{grad}_R(\mathfrak{a},\mathfrak{b},M)=\mathrm{depth}_{R}(\mathfrak{b},M)$ in \cite{IL}.

$\mathrm{(2)}$ If $(R,\mathfrak{m})$ is a local ring,~then $\textrm{f}\textrm{-}\mathrm{grad}_R(R,\mathfrak{m},M)=\mathrm{depth}_{R}M$ in \cite{BH}; $\textrm{f}\textrm{-}\mathrm{grad}_R(\mathfrak{m},\mathfrak{b},M)=\textrm{f}\textrm{-}\mathrm{depth}_{R}(\mathfrak{b},M)$ in \cite{LT}.
\end{rem}

\begin{prop}\label{prop:2.6} Let $y_{1},\cdots,y_{n}\in \mathfrak{b}$ be such that $\mathfrak{b}=(y_{1},\cdots,y_{n})$.~Then
$$\mathrm{f}\textrm{-}\mathrm{grad}_R(\mathfrak{a},\mathfrak{b},M)=n-\mathrm{sup} \{i\hspace{0.03cm}|\hspace{0.03cm}\mathrm{Supp}_R\mathrm{H}_{i}(y_{1},\cdots,y_{n};M)\not\subseteq \mathcal{V}(\mathfrak{a})\}.$$
\end{prop}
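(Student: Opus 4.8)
The plan is to show that both sides of the asserted identity are equal to the same auxiliary quantity, namely
\[
\delta:=\inf\{\mathrm{depth}_{R_\mathfrak{p}}(\mathfrak{b}R_\mathfrak{p},M_\mathfrak{p})\mid\mathfrak{p}\in\Sigma\},\qquad \Sigma:=\mathrm{Supp}_RM\setminus\mathcal{V}(\mathfrak{a}).
\]
For the right-hand side I would use that Koszul homology localizes: $\mathrm{H}_i(y_1,\cdots,y_n;M)_\mathfrak{p}\cong\mathrm{H}_i(y_1,\cdots,y_n;M_\mathfrak{p})$ for every $\mathfrak{p}\in\spec R$, and this is $0$ whenever $M_\mathfrak{p}=0$. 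Hence $\mathrm{Supp}_R\mathrm{H}_i(y_1,\cdots,y_n;M)\not\subseteq\mathcal{V}(\mathfrak{a})$ precisely when $\mathrm{H}_i(y_1,\cdots,y_n;M_\mathfrak{p})\neq0$ for some $\mathfrak{p}\in\Sigma$. For each such $\mathfrak{p}$, $R_\mathfrak{p}$ is noetherian local, $M_\mathfrak{p}$ is a nonzero finitely generated $R_\mathfrak{p}$-module, and $\mathfrak{b}R_\mathfrak{p}=(y_1/1,\cdots,y_n/1)$, so the depth sensitivity of the Koszul complex \cite{BH} yields $\sup\{i\mid\mathrm{H}_i(y_1,\cdots,y_n;M_\mathfrak{p})\neq0\}=n-\mathrm{depth}_{R_\mathfrak{p}}(\mathfrak{b}R_\mathfrak{p},M_\mathfrak{p})$ (to be read as $-\infty$ exactly when $\mathfrak{b}M_\mathfrak{p}=M_\mathfrak{p}$). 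Taking suprema over $i$ and over $\mathfrak{p}\in\Sigma$, with the convention $\sup\emptyset=-\infty$, gives
\[
n-\sup\{i\mid\mathrm{Supp}_R\mathrm{H}_i(y_1,\cdots,y_n;M)\not\subseteq\mathcal{V}(\mathfrak{a})\}=\delta.
\]

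It then remains to prove $\delta=\mathrm{f}\textrm{-}\mathrm{grad}_R(\mathfrak{a},\mathfrak{b},M)$. First suppose $\mathrm{Supp}_R(M/\mathfrak{b}M)\not\subseteq\mathcal{V}(\mathfrak{a})$ and choose a maximal $\mathfrak{a}$-filter regular $M$-sequence $x_1,\cdots,x_r$ in $\mathfrak{b}$, so $r=\mathrm{f}\textrm{-}\mathrm{grad}_R(\mathfrak{a},\mathfrak{b},M)$ by Definition \ref{df:2.4}. By Proposition \ref{prop:2.1}(3) the sequence $x_1/1,\cdots,x_r/1$ is a weak $M_\mathfrak{p}$-regular sequence in $\mathfrak{b}R_\mathfrak{p}$ for every $\mathfrak{p}\in\Sigma$, so $\mathrm{depth}_{R_\mathfrak{p}}(\mathfrak{b}R_\mathfrak{p},M_\mathfrak{p})\geq r$ for all such $\mathfrak{p}$ and $\delta\geq r$. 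For the reverse inequality I would exploit maximality: $x_1,\cdots,x_r,x$ is not an $\mathfrak{a}$-filter regular $M$-sequence for any $x\in\mathfrak{b}$, so by Proposition \ref{prop:2.1}(2) every $x\in\mathfrak{b}$ lies in some member of the finite set $\mathrm{Ass}_R(M/(x_1,\cdots,x_r)M)\setminus\mathcal{V}(\mathfrak{a})$, whence by prime avoidance there is a single $\mathfrak{p}$ in this set with $\mathfrak{b}\subseteq\mathfrak{p}$. Then $\mathfrak{p}\in\Sigma$ and $\mathfrak{p}R_\mathfrak{p}\in\mathrm{Ass}_{R_\mathfrak{p}}(M_\mathfrak{p}/(x_1,\cdots,x_r)M_\mathfrak{p})$, so every element of $\mathfrak{b}R_\mathfrak{p}\subseteq\mathfrak{p}R_\mathfrak{p}$ is a zero-divisor on $M_\mathfrak{p}/(x_1,\cdots,x_r)M_\mathfrak{p}$; together with the fact that $x_1/1,\cdots,x_r/1$ is already a weak $M_\mathfrak{p}$-regular sequence, this forces $\mathrm{depth}_{R_\mathfrak{p}}(\mathfrak{b}R_\mathfrak{p},M_\mathfrak{p})=r$, hence $\delta\leq r$ and $\delta=r$.

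Finally, when $\mathrm{Supp}_R(M/\mathfrak{b}M)\subseteq\mathcal{V}(\mathfrak{a})$, Definition \ref{df:2.4} sets $\mathrm{f}\textrm{-}\mathrm{grad}_R(\mathfrak{a},\mathfrak{b},M)=\infty$; and then every $\mathfrak{p}\in\Sigma$ satisfies $\mathfrak{b}M_\mathfrak{p}=M_\mathfrak{p}$, so $\mathrm{depth}_{R_\mathfrak{p}}(\mathfrak{b}R_\mathfrak{p},M_\mathfrak{p})=\infty$ for all $\mathfrak{p}\in\Sigma$, giving $\delta=\infty$ and, by the displayed equation, $n$ minus the supremum also equals $\infty$. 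The step I expect to be the main obstacle is the inequality $\delta\leq r$ — producing a single prime at which the ordinary depth collapses to the $\mathrm{f}$-grade — since this is where the maximality of the filter regular sequence has to be turned, via prime avoidance, into a local associated-prime statement, and where one must be careful about the primes and the indices at which a depth or a supremum becomes infinite so that the $\infty$ conventions in Definition \ref{df:2.4} are respected.
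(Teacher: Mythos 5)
Your proof is correct, but it takes a genuinely different route from the paper's. The paper argues by induction on $r=\mathrm{f}\textrm{-}\mathrm{grad}_R(\mathfrak{a},\mathfrak{b},M)$: the base case $r=0$ locates $\mathfrak{b}$ inside an associated prime $\mathfrak{p}\notin\mathcal{V}(\mathfrak{a})$ and exhibits a nonzero element of $\mathrm{H}_n(y_1,\cdots,y_n;M)_{\mathfrak{p}}=(0:_M\mathfrak{b})_{\mathfrak{p}}$, and the inductive step passes to $M/xM$ for a filter-regular $x$ and uses the Koszul long exact sequence localized at each $\mathfrak{p}\in\mathrm{Supp}_R(M/\mathfrak{b}M)\setminus\mathcal{V}(\mathfrak{a})$, which splits into short exact sequences because $x/1$ annihilates Koszul homology; in effect the paper reproves depth sensitivity in the filtered setting. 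You instead avoid induction entirely by pinning both sides to the auxiliary invariant $\delta=\inf\{\mathrm{depth}_{R_{\mathfrak{p}}}(\mathfrak{b}R_{\mathfrak{p}},M_{\mathfrak{p}})\}$: the right-hand side via localization of Koszul homology plus the classical local depth-sensitivity theorem applied pointwise, and the left-hand side via maximality of a filter-regular sequence together with prime avoidance on the finite set $\mathrm{Ass}_R(M/(x_1,\cdots,x_r)M)\setminus\mathcal{V}(\mathfrak{a})$. Your route is shorter modulo the cited classical theorem and has the bonus of independently establishing the local-depth characterization that the paper only proves later as Theorem \ref{thm:2.7} (there deduced from the Ext-characterization of \cite{FTZ}); the paper's route is self-contained at the level of Koszul complexes. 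Two small points you should make explicit: in the inequality $\delta\geq r$, a \emph{weak} $M_{\mathfrak{p}}$-regular sequence in $\mathfrak{b}R_{\mathfrak{p}}$ gives $\mathrm{depth}_{R_{\mathfrak{p}}}(\mathfrak{b}R_{\mathfrak{p}},M_{\mathfrak{p}})\geq r$ because either all the successive quotients are nonzero (so the sequence is genuinely regular) or some quotient vanishes, which forces $\mathfrak{b}M_{\mathfrak{p}}=M_{\mathfrak{p}}$ and hence depth $\infty$; and in the inequality $\delta\leq r$ one needs $M_{\mathfrak{p}}/(x_1,\cdots,x_r)M_{\mathfrak{p}}\neq0$ before applying depth additivity, which holds precisely because $\mathfrak{p}R_{\mathfrak{p}}$ is an associated prime of that quotient. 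Neither is a gap, just a line each to add.
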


\begin{proof}
If $\mathrm{Supp}_R(M/\mathfrak{b}M)\subseteq \mathcal{V}(\mathfrak{a})$,~then $\textrm{f}\textrm{-}\mathrm{grad}_R(\mathfrak{a},\mathfrak{b},M)=\infty$.~We need to show that
\begin{center}
$\mathrm{Supp}_R\mathrm{H}_{i}(y_{1},\cdots,y_{n};M)\subseteq \mathcal{V}(\mathfrak{a})$.
\end{center}
As $\textrm{f}\textrm{-}\mathrm{grad}_R(\mathfrak{a},\mathfrak{b},M)=\infty$, $\mathrm{Supp}_R\mathrm{Ext}^{i}_{R}(R/\mathfrak{b},M)\subseteq \mathcal{V}(\mathfrak{a})$ for all $i> 0$.~So for any $\mathfrak{p}\not\in \mathcal{V}(\mathfrak{a})$ and $i> 0$,~$\mathrm{Ext}^{i}_{R}(R/\mathfrak{b},M)_{\mathfrak{p}}=0$,~i.e.
$\mathrm{inf} \{i\in\mathbb{N}_{0}\hspace{0.03cm}|\hspace{0.03cm}\mathrm{Ext}^{i}_{R}(R/\mathfrak{b},M)_{\mathfrak{p}}\neq 0\}=\infty,$
which implies that
$$\mathrm{sup}\{i\in\mathbb{N}_{0}\hspace{0.03cm}|\hspace{0.03cm}\mathrm{H}_{i}(y_{1},\cdots,y_{n};M)_{\mathfrak{p}}\neq0\}=-\infty.$$
Thus $\mathrm{H}_{i}(y_{1},\cdots,y_{n};M)_{\mathfrak{p}}=0$,~so the result is proved in this case.

Now assume that $\mathrm{Supp}_R(M/\mathfrak{b}M)\not\subseteq \mathcal{V}(\mathfrak{a})$.~Set $r=\textrm{f}\textrm{-}\mathrm{grad}_R(\mathfrak{a},\mathfrak{b},M)$.~We use induction on $r$.~If $r=0$,~then $\mathfrak{b}\subseteq \underset{\mathfrak{p}\in \mathrm{Ass}_{R}M\backslash \mathcal{V}(\mathfrak{a})}{\bigcup}\mathfrak{p}$.~Hence $\mathfrak{b}\subseteq \mathfrak{p}$ for some $\mathfrak{p}\in \mathrm{Ass}_{R}M\backslash \mathcal{V}(\mathfrak{a})$.~Thus there exists $m\in M$ such that $\mathfrak{p}=\mathrm{ann}(m)$.~As $\mathfrak{b}m=0$,~we see that $m\in 0:_{M}\mathfrak{b}\subseteq\mathrm{H}_{n}(y_{1},\cdots,y_{n};M)$,~and then $\mathfrak{p}\in\mathrm{Supp}_R\mathrm{H}_{n}(y_{1},\cdots,y_{n};M)$.~While $\mathfrak{p}\not\in \mathcal{V}(\mathfrak{a})$,~so $$\mathrm{Supp}_R\mathrm{H}_{n}(y_{1},\cdots,y_{n};M)\nsubseteq \mathcal{V}(\mathfrak{a})$$
and the equality holds.~Suppose that $r > 0$.~Let $x\in \mathfrak{b}$ be an $\mathfrak{a}$-filter regular element and $M_{1}=M/xM$.~Since $\textrm{f}\textrm{-}\mathrm{grad}_R(\mathfrak{a},\mathfrak{b},M_{1})=r-1$, one has
$$\mathrm{sup}\{i\hspace{0.03cm}|\hspace{0.03cm}\mathrm{Supp}_R\mathrm{H}_{i}(y_{1},\cdots,y_{n};M_{1})\not\subseteq \mathcal{V}(\mathfrak{a})\}=n-r+1$$by the induction hypothesis.
Note that
\begin{center}$\mathrm{Supp}_R\mathrm{H}_{i}(y_{1},\cdots,y_{n};M)
\subseteq \mathrm{Supp}_RM\cap\mathcal{V}(\mathfrak{b})
= \mathrm{Supp}_R(M/\mathfrak{b}M)$,\end{center}
we have $\mathrm{H}_{i}(y_{1},\cdots,y_{n};M_{1})_{\mathfrak{p}}=0$ for all $i > n-r+1$ and any $\mathfrak{p}\in \mathrm{Supp}_R(M/\mathfrak{b}M)\backslash \mathcal{V}(\mathfrak{a})$, and there exists $\mathfrak{p}\in \mathrm{Supp}_R(M/\mathfrak{b}M)\backslash \mathcal{V}(\mathfrak{a})$ such that $\mathrm{H}_{n-r+1}(y_{1},\cdots,y_{n};M_{1})_{\mathfrak{p}}\neq0$.~For any $\mathfrak{p}\in \mathrm{Supp}_R(M/\mathfrak{b}M)\backslash \mathcal{V}(\mathfrak{a})$,~as $x\in \mathfrak{b}\subseteq \mathfrak{p}$,~we see that $x/1$ is $M_{\mathfrak{p}}$-regular element.~From the short exact sequence
$$\xymatrix@C=20pt@R=0pt{0\ar[r] & M_{\mathfrak{p}}\ar[r]^{x/1} & M_{\mathfrak{p}}\ar[r] & (M_{1})_{\mathfrak{p}}\ar[r] & 0,}$$
we have a long exact sequence
$$\xymatrix@C=15pt@R=5pt{
\cdots\ar[r] & \text{H}_{i}(y_{1}/1,\cdots,y_{n}/1;M_{\mathfrak{p}})\ar[r]^{x/1} & \mathrm{H}_{i}(y_{1}/1,\cdots,y_{n}/1;M_{\mathfrak{p}})\ar[r] & \mathrm{H}_{i}(y_{1}/1,\cdots,y_{n}/1;(M_{1})_{\mathfrak{p}}) \\
\ar[r] & \mathrm{H}_{i-1}(y_{1}/1,\cdots,y_{n}/1;M_{\mathfrak{p}})\ar[r]^{x/1} &\cdots.\hspace{3cm}&}$$
As $\mathrm{H}_{i}(y_{1}/1,\cdots,y_{n}/1;M_{\mathfrak{p}})$ is annihilated by $x/1$,~the above long exact sequence is split into short exact sequences
$$\xymatrix@C=10pt@R=10pt{0\ar[r] & \mathrm{H}_{i}(y_{1}/1,\cdots,y_{n}/1;M_{\mathfrak{p}})\ar[r] & \mathrm{H}_{i}(y_{1}/1,\cdots,y_{n}/1;(M_{1})_{\mathfrak{p}})\ar[r] & \mathrm{H}_{i-1}(y_{1}/1,\cdots,y_{n}/1;M_{\mathfrak{p}})\ar[r] & 0,}$$
that is to say,
$$\xymatrix@C=10pt@R=10pt{0\ar[r] & \mathrm{H}_{i}(y_{1},\cdots,y_{n};M)_{\mathfrak{p}}\ar[r] & \mathrm{H}_{i}(y_{1},\cdots,y_{n};M_{1})_{\mathfrak{p}}\ar[r] & \mathrm{H}_{i-1}(y_{1},\cdots,y_{n};M)_{\mathfrak{p}}\ar[r] & 0.}$$
Then $\mathrm{H}_{i}(y_{1},\cdots,y_{n};M_{1})_{\mathfrak{p}}=0$ for any $i>n-r$ and any $\mathfrak{p}\in \mathrm{Supp}_R(M/\mathfrak{b}M)\backslash \mathcal{V}(\mathfrak{a})$,~and $\mathrm{H}_{n-r}(y_{1},\cdots,y_{n};M_{1})_{\mathfrak{p}}\neq 0$ for some $\mathfrak{p}\in \mathrm{Supp}_R(M/\mathfrak{b}M)\backslash \mathcal{V}(\mathfrak{a})$.~Hence
$$\mathrm{sup} \{i\hspace{0.03cm}|\hspace{0.03cm}\mathrm{Supp}_R\mathrm{H}_{i}(y_{1},\cdots,y_{n};M)\not\subseteq \mathcal{V}(\mathfrak{a})\}=n-r.$$
Hence we obtain the desired equality.
\end{proof}

\begin{rem}\label{rem:2.5} \rm Let $M$ be an $R$-module.
\cite[Theorem 2.2]{FTZ} showed that
\begin{center}$\begin{aligned}
\textrm{f}\textrm{-}\mathrm{grad}_R(\mathfrak{a},\mathfrak{b},M)
&= \mathrm{inf} \{r\in\mathbb{N}_{0}\hspace{0.03cm}|\hspace{0.03cm}\mathrm{Supp}_{R}\mathrm{Ext}^{r}_{R}(R/\mathfrak{b},M)\not\subseteq \mathcal{V}(\mathfrak{a})\} \\
&= \mathrm{inf} \{r\in\mathbb{N}_{0}\hspace{0.03cm}|\hspace{0.03cm}\mathrm{Supp}_{R}\mathrm{H}^{r}_\mathfrak{b}(M)\not\subseteq \mathcal{V}(\mathfrak{a})\}.
\end{aligned}$\end{center}
Hence the three approaches in the introduction yield the same invariant,~that is $\mathfrak{a}$-filter grade of $\mathfrak{b}$ on $M$.
\end{rem}

The theorem below describes the local nature of $\textrm{f}\textrm{-}\mathrm{grad}_R(\mathfrak{a},\mathfrak{b},M)$.

\begin{thm}\label{thm:2.7} For an $R$-module $M$ such that $\mathrm{Supp}_R(M/\mathfrak{b}M)\not\subseteq \mathcal{V}(\mathfrak{a})$, one has that
\begin{center}$\begin{aligned}
\mathrm{f}\textrm{-}\mathrm{grad}_R(\mathfrak{a},\mathfrak{b},M)
&= \mathrm{inf} \{\mathrm{depth}_{R_{\mathfrak{p}}}(\mathfrak{b}R_{\mathfrak{p}},M_{\mathfrak{p}})\hspace{0.03cm}|\hspace{0.03cm}\mathfrak{p}\in \mathrm{Supp}_{R}(M/\mathfrak{b}M)\backslash \mathcal{V}(\mathfrak{a})\}   \\
&= \mathrm{inf} \{\mathrm{depth}_{R_{\mathfrak{p}}}(M_{\mathfrak{p}})\hspace{0.03cm}|\hspace{0.03cm}\mathfrak{p}\in \mathrm{Supp}_{R}(M/\mathfrak{b}M)\backslash \mathcal{V}(\mathfrak{a})\}.
\end{aligned}$\end{center}
\end{thm}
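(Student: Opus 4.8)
The plan is to prove both equalities by reducing to the characterization of $\textrm{f}\textrm{-}\mathrm{grad}_R(\mathfrak{a},\mathfrak{b},M)$ via $\mathrm{Ext}$ modules recalled in Remark~\ref{rem:2.5}, namely
$\textrm{f}\textrm{-}\mathrm{grad}_R(\mathfrak{a},\mathfrak{b},M)=\inf\{r\in\mathbb{N}_0 \mid \mathrm{Supp}_R\mathrm{Ext}^r_R(R/\mathfrak{b},M)\not\subseteq\mathcal{V}(\mathfrak{a})\}$, combined with the classical fact that for a local ring $(R_{\mathfrak p},\mathfrak p R_{\mathfrak p})$ one has $\mathrm{depth}_{R_{\mathfrak p}}(\mathfrak b R_{\mathfrak p},M_{\mathfrak p})=\inf\{r \mid \mathrm{Ext}^r_{R_{\mathfrak p}}(R_{\mathfrak p}/\mathfrak b R_{\mathfrak p},M_{\mathfrak p})\neq 0\}$, together with the natural isomorphism $\mathrm{Ext}^r_R(R/\mathfrak b,M)_{\mathfrak p}\cong\mathrm{Ext}^r_{R_{\mathfrak p}}(R_{\mathfrak p}/\mathfrak b R_{\mathfrak p},M_{\mathfrak p})$ (valid since $R/\mathfrak b$ is finitely presented).

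\emph{First equality.} Write $g=\textrm{f}\textrm{-}\mathrm{grad}_R(\mathfrak{a},\mathfrak{b},M)$. For the inequality $g\geq\inf\{\cdots\}$: by Remark~\ref{rem:2.5} there is a prime $\mathfrak p\notin\mathcal V(\mathfrak a)$ with $\mathrm{Ext}^g_R(R/\mathfrak b,M)_{\mathfrak p}\neq 0$, so $\mathfrak p\in\mathrm{Supp}_R(M/\mathfrak b M)$ and $\mathrm{depth}_{R_{\mathfrak p}}(\mathfrak b R_{\mathfrak p},M_{\mathfrak p})\leq g$ by the localization isomorphism. (One should note $\mathrm{depth}_{R_{\mathfrak p}}(\mathfrak b R_{\mathfrak p},M_{\mathfrak p})<\infty$ precisely because $M_{\mathfrak p}/\mathfrak b M_{\mathfrak p}\neq 0$ by the choice of $\mathfrak p$.) Conversely, for any $\mathfrak p\in\mathrm{Supp}_R(M/\mathfrak b M)\setminus\mathcal V(\mathfrak a)$ and any $r<g$, Remark~\ref{rem:2.5} gives $\mathrm{Supp}_R\mathrm{Ext}^r_R(R/\mathfrak b,M)\subseteq\mathcal V(\mathfrak a)$, hence $\mathrm{Ext}^r_R(R/\mathfrak b,M)_{\mathfrak p}=0$, so $\mathrm{depth}_{R_{\mathfrak p}}(\mathfrak b R_{\mathfrak p},M_{\mathfrak p})\geq g$; this yields $\inf\{\cdots\}\geq g$, and the two inequalities combine to give equality.

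\emph{Second equality.} This is the step I expect to carry the real content, and it rests on the following local fact: if $(R_{\mathfrak p},\mathfrak p R_{\mathfrak p})$ is local, $N=M_{\mathfrak p}$ is a finitely generated module with $N/\mathfrak b R_{\mathfrak p}N\neq 0$, then $\mathrm{depth}_{R_{\mathfrak p}}(\mathfrak b R_{\mathfrak p},N)=\mathrm{depth}_{R_{\mathfrak p}}N$. The inclusion $\mathrm{depth}_{R_{\mathfrak p}}(\mathfrak b R_{\mathfrak p},N)\leq\mathrm{depth}_{R_{\mathfrak p}}N$ is immediate since $\mathfrak b R_{\mathfrak p}\subseteq\mathfrak p R_{\mathfrak p}$. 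For the reverse, the key observation is that $\mathfrak b R_{\mathfrak p}+\mathrm{ann}_{R_{\mathfrak p}}N$ and $\mathfrak p R_{\mathfrak p}$ have the same radical: indeed $N/\mathfrak b R_{\mathfrak p}N\neq 0$ forces $\mathrm{Supp}_{R_{\mathfrak p}}(N/\mathfrak b R_{\mathfrak p}N)=\{\mathfrak p R_{\mathfrak p}\}$ (a nonzero finitely generated module over the local ring $R_{\mathfrak p}$ always has its maximal ideal in the support, and here this set is $\mathcal V(\mathfrak b R_{\mathfrak p})\cap\mathrm{Supp}_{R_{\mathfrak p}}N\subseteq\{\mathfrak p R_{\mathfrak p}\}$), whence $\sqrt{\mathfrak b R_{\mathfrak p}+\mathrm{ann}_{R_{\mathfrak p}}N}=\mathfrak p R_{\mathfrak p}$; then a standard invariance result (depth of a module with respect to an ideal depends only on the radical of that ideal together with the annihilator, equivalently $\mathrm{depth}_{R_{\mathfrak p}}(\mathfrak b R_{\mathfrak p},N)=\mathrm{depth}_{R_{\mathfrak p}}(\mathfrak b R_{\mathfrak p}+\mathrm{ann}_{R_{\mathfrak p}}N,N)=\mathrm{depth}_{R_{\mathfrak p}}(\mathfrak p R_{\mathfrak p},N)=\mathrm{depth}_{R_{\mathfrak p}}N$) finishes it. Substituting this identity termwise into the first equality, over the index set $\{\mathfrak p\in\mathrm{Supp}_R(M/\mathfrak b M)\setminus\mathcal V(\mathfrak a)\}$ — on which the hypothesis $N/\mathfrak b R_{\mathfrak p}N\neq 0$ holds automatically — gives the second equality. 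The only subtlety to watch is the bookkeeping of when these depths are finite versus infinite: restricting the infimum to primes in $\mathrm{Supp}_R(M/\mathfrak b M)$ guarantees $M_{\mathfrak p}/\mathfrak b M_{\mathfrak p}\neq 0$, which is exactly what makes both $\mathrm{depth}_{R_{\mathfrak p}}(\mathfrak b R_{\mathfrak p},M_{\mathfrak p})$ and $\mathrm{depth}_{R_{\mathfrak p}}M_{\mathfrak p}$ finite and keeps the argument from degenerating.
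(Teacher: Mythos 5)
Your argument for the first equality is correct and is essentially the paper's own argument: localize the $\ext$-characterization of $\mathrm{f}\textrm{-}\mathrm{grad}_R(\mathfrak{a},\mathfrak{b},M)$ from Remark \ref{rem:2.5} at the primes of $\mathrm{Supp}_R(M/\mathfrak{b}M)\backslash\mathcal{V}(\mathfrak{a})$.

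The proof of the second equality, however, has a genuine error: the ``local fact'' you rely on, namely that $\mathrm{depth}_{R_{\mathfrak{p}}}(\mathfrak{b}R_{\mathfrak{p}},M_{\mathfrak{p}})=\mathrm{depth}_{R_{\mathfrak{p}}}M_{\mathfrak{p}}$ termwise for each $\mathfrak{p}\in\mathrm{Supp}_R(M/\mathfrak{b}M)\backslash\mathcal{V}(\mathfrak{a})$, is false. The step that fails is the claim $\mathcal{V}(\mathfrak{b}R_{\mathfrak{p}})\cap\mathrm{Supp}_{R_{\mathfrak{p}}}M_{\mathfrak{p}}\subseteq\{\mathfrak{p}R_{\mathfrak{p}}\}$: this set contains $\mathfrak{q}R_{\mathfrak{p}}$ for \emph{every} prime $\mathfrak{q}$ with $\mathfrak{b}\subseteq\mathfrak{q}\subseteq\mathfrak{p}$ and $\mathfrak{q}\in\mathrm{Supp}_RM$, so $\mathfrak{b}R_{\mathfrak{p}}+\mathrm{ann}_{R_{\mathfrak{p}}}M_{\mathfrak{p}}$ need not have radical $\mathfrak{p}R_{\mathfrak{p}}$ unless $\mathfrak{p}$ happens to be minimal over $\mathfrak{b}+\mathrm{ann}_RM$. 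Concretely, take $R=k[x,y]_{(x,y)}$, $M=R$, $\mathfrak{b}=(x)$, $\mathfrak{a}=R$, and $\mathfrak{p}=(x,y)$: then $\mathrm{depth}_{R_{\mathfrak{p}}}(\mathfrak{b}R_{\mathfrak{p}},M_{\mathfrak{p}})=\mathrm{grade}((x),R)=1$ while $\mathrm{depth}_{R_{\mathfrak{p}}}M_{\mathfrak{p}}=2$. The second equality of the theorem is true only at the level of the two infima, not term by term. The paper's route is to invoke \cite[Proposition 1.2.10]{BH}, which gives $\mathrm{depth}_{R_{\mathfrak{p}}}(\mathfrak{b}R_{\mathfrak{p}},M_{\mathfrak{p}})=\inf\{\mathrm{depth}_{R_{\mathfrak{q}}}M_{\mathfrak{q}}\mid\mathfrak{q}\in\mathcal{V}(\mathfrak{b})\cap\mathrm{Supp}_RM,\ \mathfrak{q}\subseteq\mathfrak{p}\}$; one then checks that as $\mathfrak{p}$ ranges over $\mathrm{Supp}_R(M/\mathfrak{b}M)\backslash\mathcal{V}(\mathfrak{a})$ the union of these inner index sets is again $\mathrm{Supp}_R(M/\mathfrak{b}M)\backslash\mathcal{V}(\mathfrak{a})$ (if $\mathfrak{q}\subseteq\mathfrak{p}$ and $\mathfrak{a}\not\subseteq\mathfrak{p}$ then $\mathfrak{a}\not\subseteq\mathfrak{q}$, and each $\mathfrak{p}$ occurs as its own $\mathfrak{q}$), so the iterated infimum collapses to the right-hand side. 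Your finiteness bookkeeping at the end is fine, but it does not repair this step.
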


\begin{proof}
Set $t=\mathrm{f}\textrm{-}\mathrm{grad}_R(\mathfrak{a},\mathfrak{b},M)$.~Then $t=\mathrm{inf} \{r\in\mathbb{N}_{0}\hspace{0.03cm}|\hspace{0.03cm}\mathrm{Supp}_{R}\mathrm{Ext}^{r}_{R}(R/\mathfrak{b},M)\not\subseteq \mathcal{V}(\mathfrak{a})\}$.~Note that $\mathrm{Supp}_R\mathrm{Ext}^{i}_{R}(R/\mathfrak{b},M)\subseteq \mathcal{V}(\mathfrak{a})$ for all $i<t$,~so $\mathrm{Ext}^{i}_{R}(R/\mathfrak{b},M)_{\mathfrak{p}}=\mathrm{Ext}^{i}_{R_{\mathfrak{p}}}(R_{\mathfrak{p}}/\mathfrak{b}R_{\mathfrak{p}},M_{\mathfrak{p}})=0$ for any $i<t$ and any $\mathfrak{p}\in \mathrm{Supp}(M/\mathfrak{b}M)\backslash \mathcal{V}(\mathfrak{a})$,~which implies that $\mathrm{depth}_{R_{\mathfrak{p}}}(\mathfrak{b}R_{\mathfrak{p}},M_{\mathfrak{p}})\geq t$.~While $\mathrm{Supp}_R\mathrm{Ext}^{t}_{R}(R/\mathfrak{b},M)\not\subseteq \mathcal{V}(\mathfrak{a})$,~there exists $\mathfrak{p}\in \mathrm{Supp}_R(M/\mathfrak{b}M)\backslash\mathcal{V}(\mathfrak{a})$ such that $\mathrm{Ext}^{t}_{R_{\mathfrak{p}}}(R_{\mathfrak{p}}/\mathfrak{b}R_{\mathfrak{p}},M_{\mathfrak{p}})\neq 0$.~It follows that
$$t=\mathrm{inf} \{\mathrm{depth}_{R_{\mathfrak{p}}}(\mathfrak{b}R_{\mathfrak{p}},M_{\mathfrak{p}})\hspace{0.03cm}|\hspace{0.03cm}\mathfrak{p}\in \mathrm{Supp}_{R}(M/\mathfrak{b}M)\backslash \mathcal{V}(\mathfrak{a})\}.$$
The second statement follows from \cite[Proposition 1.2.10]{BH}.
\end{proof}

\begin{cor}\label{cor:2.8} Let $\mathfrak{a},\mathfrak{a}',\mathfrak{b},\mathfrak{b}'$ be ideals of $R$ and $M$ an $R$-module.

$\mathrm{(1)}$ If $\mathfrak{a}\subseteq\mathfrak{a}'$, then $\mathrm{f}\textrm{-}\mathrm{grad}_R(\mathfrak{a}',\mathfrak{b},M)\leq
\mathrm{f}\textrm{-}\mathrm{grad}_R(\mathfrak{a},\mathfrak{b},M)$.

$\mathrm{(2)}$ If $\mathfrak{b}\subseteq\mathfrak{b}'$, then $\mathrm{f}\textrm{-}\mathrm{grad}_R(\mathfrak{a},\mathfrak{b},M)\leq
\mathrm{f}\textrm{-}\mathrm{grad}_R(\mathfrak{a},\mathfrak{b}',M)$.

$\mathrm{(3)}$ If $\sqrt{a}=\sqrt{a'}$, then $\mathrm{f}\textrm{-}\mathrm{grad}_R(\mathfrak{a},\mathfrak{b},M)=
\mathrm{f}\textrm{-}\mathrm{grad}_R(\mathfrak{a}',\mathfrak{b},M)$.

$\mathrm{(4)}$ If $\sqrt{b}=\sqrt{b'}$, then $\mathrm{f}\textrm{-}\mathrm{grad}_R(\mathfrak{a},\mathfrak{b},M)=
\mathrm{f}\textrm{-}\mathrm{grad}_R(\mathfrak{a},\mathfrak{b}',M)$.
\end{cor}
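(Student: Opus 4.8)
The plan is to deduce all four statements from the local description of the invariant in Theorem~\ref{thm:2.7}, using only two elementary observations: first, that $\mathcal{V}(\mathfrak{c})=\mathcal{V}(\sqrt{\mathfrak{c}})$ for every ideal $\mathfrak{c}$, and $\mathcal{V}(\mathfrak{c})\subseteq\mathcal{V}(\mathfrak{c}')$ whenever $\mathfrak{c}'\subseteq\mathfrak{c}$; and second, that $\mathrm{Supp}_R(M/\mathfrak{c}M)=\mathrm{Supp}_R M\cap\mathcal{V}(\mathfrak{c})$ for a finitely generated module $M$. The one recurring subtlety is the value $\infty$: recall $\mathrm{f}\text{-}\mathrm{grad}_R(\mathfrak{a},\mathfrak{b},M)=\infty$ precisely when $\mathrm{Supp}_R(M/\mathfrak{b}M)\subseteq\mathcal{V}(\mathfrak{a})$, so in each item one must first check that the hypothesis ``$\mathrm{Supp}_R(M/\mathfrak{b}M)\not\subseteq\mathcal{V}(\mathfrak{a})$'' of Theorem~\ref{thm:2.7} propagates from one side to the other before comparing the two infima.

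For (1), assume $\mathfrak{a}\subseteq\mathfrak{a}'$, so $\mathcal{V}(\mathfrak{a}')\subseteq\mathcal{V}(\mathfrak{a})$. If the right-hand side is $\infty$ there is nothing to prove; otherwise $\mathrm{Supp}_R(M/\mathfrak{b}M)\not\subseteq\mathcal{V}(\mathfrak{a})$, hence a fortiori $\mathrm{Supp}_R(M/\mathfrak{b}M)\not\subseteq\mathcal{V}(\mathfrak{a}')$, so both sides are finite and Theorem~\ref{thm:2.7} applies to each. Since $\mathcal{V}(\mathfrak{a}')\subseteq\mathcal{V}(\mathfrak{a})$ gives $\mathrm{Supp}_R(M/\mathfrak{b}M)\setminus\mathcal{V}(\mathfrak{a})\subseteq\mathrm{Supp}_R(M/\mathfrak{b}M)\setminus\mathcal{V}(\mathfrak{a}')$, taking the infimum of $\mathrm{depth}_{R_\mathfrak{p}}M_\mathfrak{p}$ over the larger index set can only decrease it, whence $\mathrm{f}\text{-}\mathrm{grad}_R(\mathfrak{a}',\mathfrak{b},M)\leq\mathrm{f}\text{-}\mathrm{grad}_R(\mathfrak{a},\mathfrak{b},M)$. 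For (2), assume $\mathfrak{b}\subseteq\mathfrak{b}'$; then $\mathrm{Supp}_R(M/\mathfrak{b}'M)=\mathrm{Supp}_R M\cap\mathcal{V}(\mathfrak{b}')\subseteq\mathrm{Supp}_R M\cap\mathcal{V}(\mathfrak{b})=\mathrm{Supp}_R(M/\mathfrak{b}M)$. Again the case in which the target grade is $\infty$ is trivial; otherwise $\mathrm{Supp}_R(M/\mathfrak{b}'M)\not\subseteq\mathcal{V}(\mathfrak{a})$ forces $\mathrm{Supp}_R(M/\mathfrak{b}M)\not\subseteq\mathcal{V}(\mathfrak{a})$ as well. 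Here I use the \emph{second} formula of Theorem~\ref{thm:2.7}, whose integrand $\mathrm{depth}_{R_\mathfrak{p}}M_\mathfrak{p}$ does not involve the ideal at all: both grades are the infimum of the same function over the index sets $\mathrm{Supp}_R(M/\mathfrak{b}'M)\setminus\mathcal{V}(\mathfrak{a})\subseteq\mathrm{Supp}_R(M/\mathfrak{b}M)\setminus\mathcal{V}(\mathfrak{a})$, and the inequality follows.

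Parts (3) and (4) are then immediate. If $\sqrt{\mathfrak{a}}=\sqrt{\mathfrak{a}'}$ then $\mathcal{V}(\mathfrak{a})=\mathcal{V}(\mathfrak{a}')$, so every quantity occurring in Theorem~\ref{thm:2.7} — including the condition deciding the $\infty$ case — is literally unchanged when $\mathfrak{a}$ is replaced by $\mathfrak{a}'$, giving the equality. Likewise, if $\sqrt{\mathfrak{b}}=\sqrt{\mathfrak{b}'}$ then $\mathcal{V}(\mathfrak{b})=\mathcal{V}(\mathfrak{b}')$, hence $\mathrm{Supp}_R(M/\mathfrak{b}M)=\mathrm{Supp}_R(M/\mathfrak{b}'M)$; since the second formula of Theorem~\ref{thm:2.7} involves $\mathfrak{b}$ only through this support (and, again, so does the $\infty$ criterion), the equality follows directly. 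Alternatively one may invoke Remark~\ref{rem:2.5}: local cohomology depends only on the radical, so $\mathrm{H}^r_{\mathfrak{b}}(M)=\mathrm{H}^r_{\mathfrak{b}'}(M)$ for all $r$, and the characterisation there gives (4) at once.

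I do not expect a genuine obstacle: the whole content is that the invariant was designed to depend on $\mathfrak{a}$ and $\mathfrak{b}$ only through the closed sets $\mathcal{V}(\mathfrak{a})$ and $\mathcal{V}(\mathfrak{b})$, and that an infimum is anti-monotone in its index set. The only place requiring a little attention is the bookkeeping with $\infty$ described in the first paragraph — making sure that, in each item, finiteness of one side is inherited by the other so that Theorem~\ref{thm:2.7} is legitimately applicable on both sides before the two infima are compared.
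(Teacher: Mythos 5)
Your proposal is correct and follows essentially the same route as the paper: both reduce each part to the local characterization in Theorem~\ref{thm:2.7}, handle the $\infty$ case via the containment of supports and of the sets $\mathcal{V}(-)$, and then compare infima over nested index sets (your treatment merely merges the paper's first two cases into the single observation that the right-hand side being $\infty$ is trivial). No gaps.
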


\begin{proof}
(1) As $\mathfrak{a}\subseteq\mathfrak{a}'$,~we get $\mathcal{V}(\mathfrak{a}')\subseteq \mathcal{V}(\mathfrak{a})$.~We divide three cases to prove it.

Case 1.~$\mathrm{Supp}_R(M/\mathfrak{b}M)\subseteq \mathcal{V}(\mathfrak{a}')$,~then $\mathrm{f}\textrm{-}\mathrm{grad}_R(\mathfrak{a}',\mathfrak{b},M)=\infty$,
~$\mathrm{f}\textrm{-}\mathrm{grad}_R(\mathfrak{a},\mathfrak{b},M)=\infty$.~

Case 2.~$\mathrm{Supp}_R(M/\mathfrak{b}M)\not\subseteq \mathcal{V}(\mathfrak{a}')$ and $\mathrm{Supp}_R(M/\mathfrak{b}M)\subseteq \mathcal{V}(\mathfrak{a})$,~then
$\mathrm{f}\textrm{-}\mathrm{grad}_R(\mathfrak{a},\mathfrak{b},M)=\infty$.~Hence $\mathrm{f}\textrm{-}\mathrm{grad}_R(\mathfrak{a}',\mathfrak{b},M)\leq
\mathrm{f}\textrm{-}\mathrm{grad}_R(\mathfrak{a},\mathfrak{b},M)$.~

Case 3.~$\mathrm{Supp}_R(M/\mathfrak{b}M)\not\subseteq \mathcal{V}(\mathfrak{a})$,~it follows from Theorem \ref{thm:2.7} and $\mathrm{Supp}_R(M/\mathfrak{b}M)\backslash\mathcal{V}(\mathfrak{a})\subseteq
\mathrm{Supp}_R(M/\mathfrak{b}M)\backslash\mathcal{V}(\mathfrak{a}')$.

(2) Note that $\mathrm{Supp}_R(M/\mathfrak{b}'M)\subseteq \mathrm{Supp}_R(M/\mathfrak{b}M)$.~We also divide three cases to prove it.

Case 1.~$\mathrm{Supp}_R(M/\mathfrak{b}M)\subseteq \mathcal{V}(\mathfrak{a})$,~then
$\mathrm{f}\textrm{-}\mathrm{grad}_R(\mathfrak{a},\mathfrak{b}',M)=\infty$,
$\mathrm{f}\textrm{-}\mathrm{grad}_R(\mathfrak{a},\mathfrak{b},M)=\infty$.~

Case 2.~$\mathrm{Supp}_R(M/\mathfrak{b}'M)\subseteq \mathcal{V}(\mathfrak{a})$ and $\mathrm{Supp}_R(M/\mathfrak{b}M)\not\subseteq \mathcal{V}(\mathfrak{a})$,~then
$\mathrm{f}\textrm{-}\mathrm{grad}_R(\mathfrak{a},\mathfrak{b}',M)=\infty$.~Hence
$\mathrm{f}\textrm{-}\mathrm{grad}_R(\mathfrak{a},\mathfrak{b},M)\leq
\mathrm{f}\textrm{-}\mathrm{grad}_R(\mathfrak{a},\mathfrak{b}',M)$.

Case 3.~$\mathrm{Supp}_R(M/\mathfrak{b}'M)\not\subseteq \mathcal{V}(\mathfrak{a})$,~it follows from Theorem \ref{thm:2.7} and $\mathrm{Supp}_R(M/\mathfrak{b}'M)\subseteq\mathrm{Supp}_R(M/\mathfrak{b}M)$.

(3) The equality holds since $\mathcal{V}(\mathfrak{a})= \mathcal{V}(\mathfrak{a'})$.

(4) The equality holds since $\mathrm{Supp}_R(M/\mathfrak{b}M)=\mathrm{Supp}_R(M/\mathfrak{b}'M)$.
\end{proof}

\begin{cor}\label{cor:2.10}
Let $M$ be an $R$-module.~If $\mathrm{Supp}_R(M/\mathfrak{b}M)\nsubseteq \mathcal{V}(\mathfrak{a})$,~then there are inequalities
 \begin{center}
 $\mathrm{depth}_R(\mathfrak{b},M)\leq
\mathrm{f}\textrm{-}\mathrm{depth}_R(\mathfrak{b},M)\leq
\mathrm{f}\textrm{-}\mathrm{grad}_R(\mathfrak{a},\mathfrak{b},M)\leq\mathrm{ht}_M\mathfrak{b}$,
\end{center} where $\mathrm{ht}_M\mathfrak{b}$ is the infimum of lengths of strictly decreasing chains of prime ideals in $\mathrm{Supp}_RM$ starting from a prime ideal containing $\mathfrak{b}$.
In particular,~$\mathrm{f}\textrm{-}\mathrm{grad}_R(\mathfrak{a},\mathfrak{p},M)\leq \mathrm{dim}_{R_{\mathfrak{p}}}M_{\mathfrak{p}}$~for any $\mathfrak{p}\in \mathrm{Supp}_RM\backslash \mathcal{V}(\mathfrak{a})$.
\end{cor}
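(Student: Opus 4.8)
The plan is to obtain all four assertions from three ingredients already in place: the depth formula of Theorem~\ref{thm:2.7}, the monotonicity of $\textrm{f}\textrm{-}\mathrm{grad}$ in its first argument established in Corollary~\ref{cor:2.8}(1), and the identifications of Remark~\ref{rem:2.5} --- namely $\mathrm{depth}_R(\mathfrak{b},M)=\textrm{f}\textrm{-}\mathrm{grad}_R(R,\mathfrak{b},M)$ and, for $(R,\mathfrak{m})$ local, $\textrm{f}\textrm{-}\mathrm{depth}_R(\mathfrak{b},M)=\textrm{f}\textrm{-}\mathrm{grad}_R(\mathfrak{m},\mathfrak{b},M)$. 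The two left-hand inequalities are then immediate: since $\mathfrak{a}\subseteq\mathfrak{m}\subseteq R$, applying Corollary~\ref{cor:2.8}(1) twice yields
\[
\mathrm{depth}_R(\mathfrak{b},M)=\textrm{f}\textrm{-}\mathrm{grad}_R(R,\mathfrak{b},M)\leq\textrm{f}\textrm{-}\mathrm{grad}_R(\mathfrak{m},\mathfrak{b},M)=\textrm{f}\textrm{-}\mathrm{depth}_R(\mathfrak{b},M)\leq\textrm{f}\textrm{-}\mathrm{grad}_R(\mathfrak{a},\mathfrak{b},M).
\]
Alternatively, $\mathrm{depth}_R(\mathfrak{b},M)\leq\textrm{f}\textrm{-}\mathrm{grad}_R(\mathfrak{a},\mathfrak{b},M)$ can be seen by hand: any $M$-regular sequence in $\mathfrak{b}$ is automatically an $\mathfrak{a}$-filter regular $M$-sequence in $\mathfrak{b}$, since the module $\big((x_{1},\dots,x_{i-1})M:_{M}x_{i}\big)/(x_{1},\dots,x_{i-1})M$ is zero and so has support inside $\mathcal{V}(\mathfrak{a})$; hence a maximal $M$-regular sequence in $\mathfrak{b}$ has length at most $\textrm{f}\textrm{-}\mathrm{grad}_R(\mathfrak{a},\mathfrak{b},M)$.

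For the last inequality, put $t=\textrm{f}\textrm{-}\mathrm{grad}_R(\mathfrak{a},\mathfrak{b},M)$. By Theorem~\ref{thm:2.7},
\[
t=\inf\{\mathrm{depth}_{R_{\mathfrak{p}}}M_{\mathfrak{p}}\mid\mathfrak{p}\in\mathrm{Supp}_R(M/\mathfrak{b}M)\setminus\mathcal{V}(\mathfrak{a})\}.
\]
Fix a prime $\mathfrak{p}$ in this index set. Then $M_{\mathfrak{p}}\neq 0$ because $\mathfrak{p}\in\mathrm{Supp}_RM$, so the classical bound $\mathrm{depth}_{R_{\mathfrak{p}}}M_{\mathfrak{p}}\leq\dim_{R_{\mathfrak{p}}}M_{\mathfrak{p}}$ \cite[Proposition~1.2.12]{BH} forces $t\leq\dim_{R_{\mathfrak{p}}}M_{\mathfrak{p}}$. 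Since $M$ is finitely generated, $\mathrm{Supp}_RM=\mathcal{V}(\mathrm{ann}_RM)$ is closed under specialization, so $\dim_{R_{\mathfrak{p}}}M_{\mathfrak{p}}$ is precisely the supremum of the lengths of strictly decreasing chains of primes lying in $\mathrm{Supp}_RM$ and having $\mathfrak{p}$ as largest term. Taking the infimum of $\dim_{R_{\mathfrak{p}}}M_{\mathfrak{p}}$ over the primes $\mathfrak{p}$ in the above index set --- i.e. over the primes from which the chains realizing $\mathrm{ht}_M\mathfrak{b}$ are taken --- gives $t\leq\mathrm{ht}_M\mathfrak{b}$.

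For the closing ``in particular'', specialize $\mathfrak{b}$ to a prime $\mathfrak{p}\in\mathrm{Supp}_RM\setminus\mathcal{V}(\mathfrak{a})$. Then $\mathfrak{p}\in\mathcal{V}(\mathfrak{p})\cap\mathrm{Supp}_RM=\mathrm{Supp}_R(M/\mathfrak{p}M)$ while $\mathfrak{p}\notin\mathcal{V}(\mathfrak{a})$, so $\mathrm{Supp}_R(M/\mathfrak{p}M)\not\subseteq\mathcal{V}(\mathfrak{a})$, and Theorem~\ref{thm:2.7} gives $\textrm{f}\textrm{-}\mathrm{grad}_R(\mathfrak{a},\mathfrak{p},M)\leq\mathrm{depth}_{R_{\mathfrak{p}}}M_{\mathfrak{p}}\leq\dim_{R_{\mathfrak{p}}}M_{\mathfrak{p}}$ (this is also the displayed chain in the case $\mathfrak{b}=\mathfrak{p}$, since the chains contributing to $\mathrm{ht}_M\mathfrak{p}$ may be started at $\mathfrak{p}$ itself). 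I do not foresee a serious obstacle here: once Theorem~\ref{thm:2.7} is on hand the argument collapses to the classical inequality $\mathrm{depth}\leq\dim$ over a Noetherian local ring, and the only point that needs attention is that the family of primes indexing $\mathrm{ht}_M\mathfrak{b}$ be read as $\mathrm{Supp}_R(M/\mathfrak{b}M)\setminus\mathcal{V}(\mathfrak{a})$.
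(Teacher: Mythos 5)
Your proposal follows essentially the same route as the paper's proof: the first two inequalities come from the identifications of Remark~\ref{rem:2.5} together with the monotonicity in Corollary~\ref{cor:2.8}(1) (the paper quotes \cite[Corollary~1.1.3]{BH} for the first inequality instead, but your internal derivation is equivalent), the third inequality comes from Theorem~\ref{thm:2.7} combined with $\depth_{R_{\mathfrak{p}}}M_{\mathfrak{p}}\leq\dim_{R_{\mathfrak{p}}}M_{\mathfrak{p}}$, and the ``in particular'' is handled identically (there the relevant prime $\mathfrak{p}$ itself lies outside $\mathcal{V}(\mathfrak{a})$, so that part is unproblematic).

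The caveat you raise at the end is, however, a genuine gap and not a cosmetic one. The argument only yields
$$\mathrm{f}\textrm{-}\mathrm{grad}_R(\mathfrak{a},\mathfrak{b},M)\leq\inf\{\dim_{R_{\mathfrak{p}}}M_{\mathfrak{p}}\mid\mathfrak{p}\in\mathrm{Supp}_R(M/\mathfrak{b}M)\setminus\mathcal{V}(\mathfrak{a})\},$$
whereas $\mathrm{ht}_M\mathfrak{b}$ as defined in the statement is the infimum of $\dim_{R_{\mathfrak{p}}}M_{\mathfrak{p}}$ over \emph{all} of $\mathrm{Supp}_R(M/\mathfrak{b}M)$, which can be strictly smaller, and then the inequality as stated actually fails. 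For instance, take $R=k[[x,y,z]]$, $M=R$, $\mathfrak{a}=(x)$ and $\mathfrak{b}=(xy,xz)=(x)\cap(y,z)$: here $\mathrm{Supp}_R(M/\mathfrak{b}M)\setminus\mathcal{V}(\mathfrak{a})=\{(y,z)\}$, so Theorem~\ref{thm:2.7} gives $\mathrm{f}\textrm{-}\mathrm{grad}_R(\mathfrak{a},\mathfrak{b},M)=\depth R_{(y,z)}=2$, while $\mathrm{ht}_M\mathfrak{b}=\mathrm{ht}(x)=1$. So the third inequality is only true with the restricted index set you propose. You should not feel you missed a step the paper supplies: the paper's own proof proves $\depth_{R_{\mathfrak{p}}}M_{\mathfrak{p}}\leq\mathrm{ht}_M\mathfrak{p}$ only for $\mathfrak{p}\notin\mathcal{V}(\mathfrak{a})$ and then passes silently to $\mathrm{ht}_M\mathfrak{b}$; your write-up has located exactly the point where that passage breaks down, and the correct fix is to restate the bound with the infimum taken over $\mathrm{Supp}_R(M/\mathfrak{b}M)\setminus\mathcal{V}(\mathfrak{a})$.
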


\begin{proof}
The first inequality follows from \cite[Corollary 1.1.3]{BH},~the second one
is by Corollary \ref{cor:2.8}(1).~It remains to show the third inequality.~For any $\mathfrak{p}\in\mathrm{Supp}_R(M/\mathfrak{b}M)\backslash\mathcal{V}(\mathfrak{a})$,~one has that $\mathrm{depth}_{R_\mathfrak{p}}M_\mathfrak{p}\leq\mathrm{dim}_{R_\mathfrak{p}}M_\mathfrak{p}
=\mathrm{ht}_M\mathfrak{p}$.

~Moreover,~for any $\mathfrak{p}\in \mathrm{Supp}_RM\backslash \mathcal{V}(\mathfrak{a})$, $\mathfrak{p}\in \mathrm{Supp}_R(M/\mathfrak{p}M)\backslash \mathcal{V}(\mathfrak{a})$ since $\mathrm{Supp}_R(M/\mathfrak{p}M)=\mathrm{Supp}_RM \cap \mathcal{V}(\mathfrak{p})$.~Hence $\mathrm{f}\textrm{-}\mathrm{grad}_R(\mathfrak{a},\mathfrak{p},M)\leq \mathrm{dim}_{R_{\mathfrak{p}}}M_{\mathfrak{p}}$.
\end{proof}

The next example shows that the inequalities in the above corollaries can be strict.

\begin{exa}\label{exa:2.9} {\rm Let $K[x_{1},x_{2}]$ be a polynomial ring and $K$ a field.~Set $R=K[x_{1},x_{2},x_{3}]_{(x_{1},x_{2},x_{3})}$ and $M=R\oplus R/(x^{2}_{2},x^{3}_{3})$,~$\mathfrak{a}=(x_{1},x_{2},x_{3})$,~$\mathfrak{b}=(x_{1})$ and $\mathfrak{b}'=(x_{1},x_{2})$.~Then~$0:_{M}x_{1}=0$ and $0:_{M/x_{1}M}x_{2}\subseteq R/(x_{1},x_{2}^{2},x_{3}^{3})\subseteq \mathcal{V}(\mathfrak{a})$.~It follows that $x_{1},x_{2}$ is an $\mathfrak{a}$-filter regular $M$-sequence.~Hence $\mathrm{f}\textrm{-}\mathrm{grad}_R(\mathfrak{a},\mathfrak{b},M)=1<2=\mathrm{f}\textrm{-}\mathrm{grad}_R(\mathfrak{a},\mathfrak{b'},M)$}.
\end{exa}

\begin{cor}\label{cor:2.11} For an $R$-module $M$, one has
\begin{center}
 $\mathrm{f}\textrm{-}\mathrm{grad}_R(\mathfrak{a},\mathfrak{b},M)=
\mathrm{inf}\{\mathrm{f}\textrm{-}\mathrm{grad}_R(\mathfrak{a},\mathfrak{p},M)\hspace{0.03cm}|\hspace{0.03cm}
\mathfrak{p}\in \mathcal{V}(\mathfrak{b})\}$.
\end{center}
\end{cor}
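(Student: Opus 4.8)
The plan is to reduce the statement to the local description of $\textrm{f}\textrm{-}\mathrm{grad}$ provided by Theorem \ref{thm:2.7}, together with the monotonicity in the second ideal from Corollary \ref{cor:2.8}(2). First I would dispose of the degenerate case $\mathrm{Supp}_R(M/\mathfrak{b}M)\subseteq\mathcal{V}(\mathfrak{a})$ (which includes $\mathfrak{b}=R$, under the convention $\inf\emptyset=\infty$), where $\textrm{f}\textrm{-}\mathrm{grad}_R(\mathfrak{a},\mathfrak{b},M)=\infty$ by definition. For any $\mathfrak{p}\in\mathcal{V}(\mathfrak{b})$ one has $\mathcal{V}(\mathfrak{p})\subseteq\mathcal{V}(\mathfrak{b})$, hence
$$\mathrm{Supp}_R(M/\mathfrak{p}M)=\mathrm{Supp}_RM\cap\mathcal{V}(\mathfrak{p})\subseteq\mathrm{Supp}_RM\cap\mathcal{V}(\mathfrak{b})=\mathrm{Supp}_R(M/\mathfrak{b}M)\subseteq\mathcal{V}(\mathfrak{a}),$$
so $\textrm{f}\textrm{-}\mathrm{grad}_R(\mathfrak{a},\mathfrak{p},M)=\infty$ for every such $\mathfrak{p}$ and both sides of the asserted equality equal $\infty$.

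For the remaining case assume $\mathrm{Supp}_R(M/\mathfrak{b}M)\not\subseteq\mathcal{V}(\mathfrak{a})$ and put $t=\textrm{f}\textrm{-}\mathrm{grad}_R(\mathfrak{a},\mathfrak{b},M)$. The inequality $t\leq\inf\{\textrm{f}\textrm{-}\mathrm{grad}_R(\mathfrak{a},\mathfrak{p},M)\mid\mathfrak{p}\in\mathcal{V}(\mathfrak{b})\}$ is immediate: any $\mathfrak{p}\in\mathcal{V}(\mathfrak{b})$ satisfies $\mathfrak{b}\subseteq\mathfrak{p}$, so $\textrm{f}\textrm{-}\mathrm{grad}_R(\mathfrak{a},\mathfrak{b},M)\leq\textrm{f}\textrm{-}\mathrm{grad}_R(\mathfrak{a},\mathfrak{p},M)$ by Corollary \ref{cor:2.8}(2).

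For the reverse inequality I would use Theorem \ref{thm:2.7} to write $t=\inf\{\mathrm{depth}_{R_\mathfrak{q}}M_\mathfrak{q}\mid\mathfrak{q}\in\mathrm{Supp}_R(M/\mathfrak{b}M)\backslash\mathcal{V}(\mathfrak{a})\}$. Since each such $\mathfrak{q}$ lies in $\mathrm{Supp}_RM$, the module $M_\mathfrak{q}$ is a nonzero finitely generated module over the noetherian local ring $R_\mathfrak{q}$, so every depth occurring is a finite natural number and the infimum is a minimum, attained at some $\mathfrak{p}_0\in\mathrm{Supp}_R(M/\mathfrak{b}M)\backslash\mathcal{V}(\mathfrak{a})$ with $\mathrm{depth}_{R_{\mathfrak{p}_0}}M_{\mathfrak{p}_0}=t$. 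Now $\mathfrak{b}\subseteq\mathfrak{p}_0$ gives $\mathfrak{p}_0\in\mathcal{V}(\mathfrak{b})$, and $\mathfrak{p}_0\in\mathrm{Supp}_RM\cap\mathcal{V}(\mathfrak{p}_0)=\mathrm{Supp}_R(M/\mathfrak{p}_0M)$ while $\mathfrak{p}_0\notin\mathcal{V}(\mathfrak{a})$, so $\mathrm{Supp}_R(M/\mathfrak{p}_0M)\not\subseteq\mathcal{V}(\mathfrak{a})$ and Theorem \ref{thm:2.7} applies with the ideal $\mathfrak{p}_0$ in place of $\mathfrak{b}$. Taking in that formula the term indexed by $\mathfrak{q}=\mathfrak{p}_0$ yields $\textrm{f}\textrm{-}\mathrm{grad}_R(\mathfrak{a},\mathfrak{p}_0,M)\leq\mathrm{depth}_{R_{\mathfrak{p}_0}}M_{\mathfrak{p}_0}=t$, hence $\inf\{\textrm{f}\textrm{-}\mathrm{grad}_R(\mathfrak{a},\mathfrak{p},M)\mid\mathfrak{p}\in\mathcal{V}(\mathfrak{b})\}\leq t$, which together with the previous paragraph completes the proof. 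The only delicate point is that the infimum in Theorem \ref{thm:2.7} is realized by an honest prime $\mathfrak{p}_0$ and that this $\mathfrak{p}_0$ automatically lies in $\mathrm{Supp}_R(M/\mathfrak{p}_0M)$ as well; both follow from finiteness of local depths and the identity $\mathrm{Supp}_R(M/\mathfrak{c}M)=\mathrm{Supp}_RM\cap\mathcal{V}(\mathfrak{c})$, so no genuine obstacle arises.
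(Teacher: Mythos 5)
Your proposal is correct and follows essentially the same route as the paper's own proof: the inequality $\leq$ via the monotonicity of Corollary \ref{cor:2.8}(2), and the inequality $\geq$ by using Theorem \ref{thm:2.7} to realize $t$ as a depth $\mathrm{depth}_{R_{\mathfrak{p}_0}}M_{\mathfrak{p}_0}$ at some $\mathfrak{p}_0\in\mathrm{Supp}_R(M/\mathfrak{b}M)\setminus\mathcal{V}(\mathfrak{a})\subseteq\mathcal{V}(\mathfrak{b})$ and then applying Theorem \ref{thm:2.7} again to the ideal $\mathfrak{p}_0$. Your extra remarks on why the infimum is attained and why $\mathfrak{p}_0\in\mathrm{Supp}_R(M/\mathfrak{p}_0M)\setminus\mathcal{V}(\mathfrak{a})$ only make explicit points the paper leaves implicit.
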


\begin{proof}
If $\mathrm{Supp}_R(M/\mathfrak{b}M)\subseteq \mathcal{V}(\mathfrak{a})$,~then $\mathrm{f}\textrm{-}\mathrm{grad}_R(\mathfrak{a},\mathfrak{b},M)=\infty$.~For every $\mathfrak{p}\in \mathcal{V}(\mathfrak{b})$,~$\mathrm{f}\textrm{-}\mathrm{grad}_R(\mathfrak{a},\\\mathfrak{b},M)\leq
\mathrm{f}\textrm{-}\mathrm{grad}_R(\mathfrak{a},\mathfrak{p},M)$ by Corollary \ref{cor:2.8}.~Hence $\mathrm{inf}\{\mathrm{f}\textrm{-}\mathrm{grad}_R(\mathfrak{a},\mathfrak{p},M)\hspace{0.03cm}|\hspace{0.03cm}
\mathfrak{p}\in \mathcal{V}(\mathfrak{b})\}=\infty$,~the equality holds.~Now assume that $\mathrm{Supp}_R(M/\mathfrak{b}M)\nsubseteq \mathcal{V}(\mathfrak{a})$.~Set $\ell=\mathrm{f}\textrm{-}\mathrm{grad}_R(\mathfrak{a},\mathfrak{b},M)$.~It follows from Corollary \ref{cor:2.8} that $\ell\leq \mathrm{inf}\{\mathrm{f}\textrm{-}\mathrm{grad}_R(\mathfrak{a},\mathfrak{p},M)\hspace{0.03cm}|\hspace{0.03cm}
\mathfrak{p}\in \mathcal{V}(\mathfrak{b})\}$.~On the other hand,~there is $\mathfrak{p}\in \mathcal{V}(\mathfrak{b})$ such that
$\mathrm{depth}_{R_\mathfrak{p}}M_\mathfrak{p}=\ell$ ~by Theorem \ref{thm:2.7}.~But $\mathfrak{p}\not\in\mathcal{V}(\mathfrak{a})$,~so $\mathrm{f}\textrm{-}\mathrm{grad}_R(\mathfrak{a},\mathfrak{p},M)\leq
\mathrm{depth}_{R_\mathfrak{p}}M_\mathfrak{p}=\ell$ by Theorem \ref{thm:2.7} again.~Thus $\ell \geq \mathrm{inf}\{\mathrm{f}\textrm{-}\mathrm{grad}_R(\mathfrak{a},\mathfrak{p},M)\hspace{0.03cm}|\hspace{0.03cm}
\mathfrak{p}\in \mathcal{V}(\mathfrak{b})\}$.~Consequently, $\ell = \mathrm{inf}\{\mathrm{f}\textrm{-}\mathrm{grad}_R(\mathfrak{a},\mathfrak{p},M)\hspace{0.03cm}|\hspace{0.03cm}
\mathfrak{p}\in \mathcal{V}(\mathfrak{b})\}$,~as desired.
\end{proof}

The following result gives a bound of $\mathrm{f}\textrm{-}\mathrm{grad}_R(\mathfrak{a},\mathfrak{b},M)$.

\begin{prop}\label{prop:2.13} Let~$ 0\neq M$ be an $R$-module such that $\mathrm{Supp}_R(M/\mathfrak{b}M)\nsubseteq \mathcal{V}(\mathfrak{a})$. Then for any $\mathfrak{p}\in\mathrm{Supp}_R(M/\mathfrak{b}M)\backslash \mathcal{V}(\mathfrak{a})$,~there exists an inequality
\begin{center}
$\mathrm{f}\textrm{-}\mathrm{grad}_R(\mathfrak{a},\mathfrak{b},M)\leq \mathrm{dim}_RM-\mathrm{dim}_{R}R/\mathfrak{p}$.
\end{center}
\end{prop}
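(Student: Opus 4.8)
The plan is to deduce the inequality from the local description of $\mathrm{f}\textrm{-}\mathrm{grad}_R(\mathfrak{a},\mathfrak{b},M)$ given in Theorem~\ref{thm:2.7}, combined with the classical bound $\mathrm{depth}\leq\mathrm{dim}$ and a dimension count for prime chains in $\mathrm{Supp}_RM$. First I would observe that, since $\mathrm{Supp}_R(M/\mathfrak{b}M)=\mathrm{Supp}_RM\cap\mathcal{V}(\mathfrak{b})$, the hypothesis $\mathfrak{p}\in\mathrm{Supp}_R(M/\mathfrak{b}M)\setminus\mathcal{V}(\mathfrak{a})$ forces $\mathfrak{p}\in\mathrm{Supp}_RM$, so $M_{\mathfrak{p}}\neq 0$ and $\mathrm{depth}_{R_{\mathfrak{p}}}M_{\mathfrak{p}}$ is a finite nonnegative integer. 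Because $\mathfrak{p}$ is one of the primes ranged over in the infimum in Theorem~\ref{thm:2.7}, we get
$$\mathrm{f}\textrm{-}\mathrm{grad}_R(\mathfrak{a},\mathfrak{b},M)\leq \mathrm{depth}_{R_{\mathfrak{p}}}M_{\mathfrak{p}}\leq \mathrm{dim}_{R_{\mathfrak{p}}}M_{\mathfrak{p}},$$
the second inequality being \cite[Proposition 1.2.12]{BH}. It therefore suffices to prove the dimension estimate $\mathrm{dim}_{R_{\mathfrak{p}}}M_{\mathfrak{p}}+\mathrm{dim}_RR/\mathfrak{p}\leq\mathrm{dim}_RM$.

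For the dimension estimate I would splice two chains of primes living in $\mathrm{Supp}_RM$. On the one hand, $\mathrm{Supp}_{R_{\mathfrak{p}}}M_{\mathfrak{p}}$ consists of exactly the primes $\mathfrak{q}R_{\mathfrak{p}}$ with $\mathfrak{q}\in\mathrm{Supp}_RM$ and $\mathfrak{q}\subseteq\mathfrak{p}$, and any prime between such a $\mathfrak{q}$ and $\mathfrak{p}$ again lies in $\mathrm{Supp}_RM=\mathcal{V}(\mathrm{Ann}_RM)$; hence there is a chain $\mathfrak{q}_0\subsetneq\mathfrak{q}_1\subsetneq\cdots\subsetneq\mathfrak{q}_d=\mathfrak{p}$ of primes of $\mathrm{Supp}_RM$ with $d=\mathrm{dim}_{R_{\mathfrak{p}}}M_{\mathfrak{p}}$. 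On the other hand, since $\mathrm{Supp}_RM$ is closed under specialization and contains $\mathfrak{p}$, any saturated chain $\mathfrak{p}=\mathfrak{p}_0\subsetneq\mathfrak{p}_1\subsetneq\cdots\subsetneq\mathfrak{p}_e$ with $e=\mathrm{dim}_RR/\mathfrak{p}$ also lies in $\mathrm{Supp}_RM$. Concatenating the two chains at $\mathfrak{p}$ produces a chain of length $d+e$ inside $\mathrm{Supp}_RM$ starting at $\mathfrak{q}_0$, so $\mathrm{dim}_RR/\mathfrak{q}_0\geq d+e$, and therefore $\mathrm{dim}_RM\geq\mathrm{dim}_RR/\mathfrak{q}_0\geq d+e$, which is exactly the required inequality. (If either $\mathrm{dim}_RR/\mathfrak{p}$ or $\mathrm{dim}_{R_{\mathfrak{p}}}M_{\mathfrak{p}}$ is infinite, the same construction yields arbitrarily long chains in $\mathrm{Supp}_RM$, so $\mathrm{dim}_RM=\infty$ and there is nothing to prove.)

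All the real content sits in Theorem~\ref{thm:2.7}, which is already available; the remaining ingredients — $\mathrm{depth}\leq\mathrm{dim}$ for a finitely generated module and the superadditivity of dimension along a prime $\mathfrak{p}\in\mathrm{Supp}_RM$ — are standard. I do not anticipate a genuine obstacle; the only point requiring slight care is to write the prime-chain argument so that it covers the infinite-dimensional case as well, and to make sure the chosen $\mathfrak{p}$ indeed appears among the primes over which the infimum in Theorem~\ref{thm:2.7} is taken, which it does precisely because $\mathfrak{p}\in\mathrm{Supp}_R(M/\mathfrak{b}M)\setminus\mathcal{V}(\mathfrak{a})$.
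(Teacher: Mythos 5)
Your proof is correct and follows essentially the same route as the paper: the same chain $\mathrm{f}\textrm{-}\mathrm{grad}_R(\mathfrak{a},\mathfrak{b},M)\leq \mathrm{depth}_{R_{\mathfrak{p}}}M_{\mathfrak{p}}\leq \mathrm{dim}_{R_{\mathfrak{p}}}M_{\mathfrak{p}}\leq \mathrm{dim}_RM-\mathrm{dim}_RR/\mathfrak{p}$, justified by Theorem~\ref{thm:2.7}, the classical $\mathrm{depth}\leq\mathrm{dim}$ bound, and the dimension inequality. The only difference is that you spell out the prime-chain splicing argument for the last inequality, where the paper simply cites \cite[p.413]{BH}.
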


\begin{proof} We have the following inequalities
\begin{center}$\begin{aligned}\mathrm{f}\textrm{-}\mathrm{grad}_R(\mathfrak{a},\mathfrak{b},M)
&\leq\mathrm{depth}_{R_\mathfrak{p}}M_\mathfrak{p}\\
&\leq\mathrm{dim}_{R_\mathfrak{p}}M_\mathfrak{p}\\
&\leq \mathrm{dim}_RM-\mathrm{dim}_{R}R/\mathfrak{p},\end{aligned}$\end{center}
where the first one is by Theorem \ref{thm:2.7}, the second one is by \cite[Theorem 9.3]{IL} since $M_\mathfrak{p}\not\simeq0$ and the third one is by \cite[p.413]{BH},~as claimed.
\end{proof}

The next corollary is a direct consequence of
 the preceding proposition,~which recovers and generalizes known results about the usual ($\mathrm{f}$-)depth.

\begin{cor}\label{cor:2.14} Let ~$ 0\neq M$ be an $R$-module.

$\mathrm{(1)}$ $\mathrm{depth}_{R}(\mathfrak{b},M)\leq \mathrm{dim}_RM-\mathrm{dim}_{R}M/\mathfrak{b}M$.

$\mathrm{(2)}$ If $(R,\mathfrak{m})$ is local,~then $\mathrm{f}\textrm{-}\mathrm{depth}_R(\mathfrak{b},M)\leq \mathrm{dim}_RM-\mathrm{dim}_{R}R/\mathfrak{p}$ for all $\mathfrak{p}\in \mathrm{Supp}_{R}M\backslash \{\mathfrak{m}\}$.

$\mathrm{(3)}$ If $(R,\mathfrak{m})$ is local,~then $\mathrm{depth}_{R}M\leq \mathrm{dim}_{R}M$.
\end{cor}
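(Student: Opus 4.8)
The plan is to obtain all three inequalities as immediate specializations of Proposition~\ref{prop:2.13}, choosing the ideal $\mathfrak{a}$ (and, for the first one, the prime $\mathfrak{p}$) appropriately; no new input beyond careful bookkeeping of conventions is needed.

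For (1) I would take $\mathfrak{a}=R$. Since $\mathcal{V}(R)=\emptyset$, the standing hypothesis $\mathrm{Supp}_R(M/\mathfrak{b}M)\nsubseteq\mathcal{V}(\mathfrak{a})$ of Proposition~\ref{prop:2.13} reduces to $M/\mathfrak{b}M\ne0$; if $M/\mathfrak{b}M=0$ both sides of the asserted inequality are $\infty$ under the usual conventions, so that case is harmless. By the remark immediately following Definition~\ref{df:2.4} one has $\mathrm{f}\textrm{-}\mathrm{grad}_R(R,\mathfrak{b},M)=\mathrm{depth}_R(\mathfrak{b},M)$. I would then pick a prime $\mathfrak{p}\in\mathrm{Supp}_R(M/\mathfrak{b}M)$ with $\dim_R R/\mathfrak{p}=\dim_R(M/\mathfrak{b}M)$, which is possible since that supremum is attained on the support, and feed it into Proposition~\ref{prop:2.13}; this gives precisely $\mathrm{depth}_R(\mathfrak{b},M)\le\dim_R M-\dim_R(M/\mathfrak{b}M)$.

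For (2) I would instead take $\mathfrak{a}=\mathfrak{m}$, so that $\mathcal{V}(\mathfrak{m})=\{\mathfrak{m}\}$ and, by the local case of the same remark, $\mathrm{f}\textrm{-}\mathrm{grad}_R(\mathfrak{m},\mathfrak{b},M)=\mathrm{f}\textrm{-}\mathrm{depth}_R(\mathfrak{b},M)$; if $M/\mathfrak{b}M$ has finite length there is nothing to prove, and otherwise Proposition~\ref{prop:2.13} applies verbatim and yields $\mathrm{f}\textrm{-}\mathrm{depth}_R(\mathfrak{b},M)\le\dim_R M-\dim_R R/\mathfrak{p}$ for every $\mathfrak{p}\in\mathrm{Supp}_R(M/\mathfrak{b}M)\backslash\{\mathfrak{m}\}$. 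For (3) I would specialize (1) to $\mathfrak{b}=\mathfrak{m}$: since $0\ne M$, Nakayama's lemma gives $M/\mathfrak{m}M\ne0$, a nonzero module of finite length, whence $\dim_R(M/\mathfrak{m}M)=0$ and therefore $\mathrm{depth}_R M=\mathrm{depth}_R(\mathfrak{m},M)\le\dim_R M$ (equivalently, apply Proposition~\ref{prop:2.13} with $\mathfrak{a}=R$, $\mathfrak{b}=\mathfrak{m}$, $\mathfrak{p}=\mathfrak{m}$).

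I do not expect a genuine obstacle: the whole substance already lies in Proposition~\ref{prop:2.13}, hence in Theorem~\ref{thm:2.7} together with \cite[Theorem~9.3]{IL} and \cite[p.~413]{BH}. The only points deserving attention are the degenerate conventions ($M/\mathfrak{b}M=0$ in (1), $M/\mathfrak{b}M$ of finite length in (2)) and the elementary fact that the Krull dimension of a finitely generated module is attained at an actual prime of its support — this is what turns the existential statement of Proposition~\ref{prop:2.13} into the closed-form bound $\dim_R M-\dim_R(M/\mathfrak{b}M)$ in (1).
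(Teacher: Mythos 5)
Your proposal is correct and coincides with the paper's own (unwritten) argument: the paper offers no proof beyond the phrase ``direct consequence of the preceding proposition,'' and your derivation is exactly that specialization of Proposition~\ref{prop:2.13}, taking $\mathfrak{a}=R$ for (1), $\mathfrak{a}=\mathfrak{m}$ for (2), and $\mathfrak{b}=\mathfrak{m}$ for (3), together with the observation that $\dim_R(M/\mathfrak{b}M)$ is attained at a prime of $\mathrm{Supp}_R(M/\mathfrak{b}M)$. One caveat worth recording: in (2) your argument yields the bound only for $\mathfrak{p}\in\mathrm{Supp}_R(M/\mathfrak{b}M)\backslash\{\mathfrak{m}\}$, whereas the corollary as printed ranges over all of $\mathrm{Supp}_R M\backslash\{\mathfrak{m}\}$; the printed version is in fact false (e.g.\ $R=k[[x,y,z]]$, $M=R$, $\mathfrak{b}=(x,y)$, $\mathfrak{p}=(x)$ gives $\mathrm{f}\textrm{-}\mathrm{depth}_R(\mathfrak{b},M)=2>1=\dim_RM-\dim_RR/\mathfrak{p}$), so your tacit restriction to $\mathrm{Supp}_R(M/\mathfrak{b}M)$ is the correct reading and should be stated explicitly rather than passed over with ``there is nothing to prove.''
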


\bigskip
\section{\bf  $(\mathfrak{a},\mathfrak{b})$-$\mathrm{f}$-modules}
Let~$0\neq M$ be an $R$-module such that $\mathrm{Supp}_R(M/\mathfrak{b}M)\nsubseteq \mathcal{V}(\mathfrak{a})$.~According to Proposition \ref{prop:2.13},~we have an inequality
$$\mathrm{f}\textrm{-}\mathrm{grad}_R(\mathfrak{a},\mathfrak{p},M)\leq \mathrm{dim}_RM-\mathrm{dim}_{R}R/\mathfrak{p}\ \textrm{for\ any}\ \mathfrak{p}\in\mathrm{Supp}_R(M/\mathfrak{b}M)\backslash \mathcal{V}(\mathfrak{a}).$$
This section calls $M$ an $(\mathfrak{a},\mathfrak{b})\text{-}\mathrm{f}$-module whenever the equality holds, some properties that are analogous to those of Cohen Macaulay modules are given.

\begin{df}\label{df:3.1}
\rm Let~$0\neq M$ be an $R$-module such that  $\mathrm{Supp}_R(M/\mathfrak{b}M)\nsubseteq \mathcal{V}(\mathfrak{a})$.~$M$ is said to be $(\mathfrak{a},\mathfrak{b})$-$\mathrm{f}$-module~if the following equality holds
$$\mathrm{f}\textrm{-}\mathrm{grad}_R(\mathfrak{a},\mathfrak{p},M)=\mathrm{dim}_RM-\mathrm{dim}_{R}R/\mathfrak{p}$$
for any $\mathfrak{p}\in\mathrm{Supp}_R(M/\mathfrak{b}M)\backslash \mathcal{V}(\mathfrak{a})$.
\end{df}

Suppose that $\mathrm{Supp}_R(M/\mathfrak{b}M)\nsubseteq \mathcal{V}(\mathfrak{a})$.~If $x$ is $\mathfrak{a}$-filter regular in $\mathfrak{b}$,~then $x\not\in \underset{\mathfrak{p}\in \mathrm{Ass}_{R}M\backslash \mathcal{V}(\mathfrak{a})}{\bigcup}\mathfrak{p}$. Hence $x\not\in \underset{\mathfrak{p}\in \mathrm{min}(\mathrm{Ass}_{R}M)\backslash \mathcal{V}(\mathfrak{a})}{\bigcup}\mathfrak{p}$,~where $\mathrm{min}(\mathrm{Ass}_{R}M)$ is the set of minimal elements of $\mathrm{Ass}_{R}M$. Thus $\mathrm{dim}_R(M/xM)=\mathrm{dim}_RM-1$, which shows that every $\mathfrak{a}$-filter regular sequence in $\mathfrak{b}$ is a part of system of parameters for $M$.

Recall that an $R$-module $M$ is said to be a Cohen Macaulay module~if $\mathrm{depth}_{R_{\mathfrak{m}}}M_{\mathfrak{m}}=\mathrm{dim}_{R_{\mathfrak{m}}}M_{\mathfrak{m}}$ for all maximal ideals $\mathfrak{m}$ of $R$ (details see \cite{BH}).~$M$ is called a $\mathfrak{b}$-Cohen Macaulay module~if the equality $\mathrm{depth}_{R}(\mathfrak{b},M)+\mathrm{dim}_{R}(M/\mathfrak{b}M)=\mathrm{dim}_{R}M$ holds (details see \cite{MA}).

The following proposition gives some examples of $(\mathfrak{a},\mathfrak{b})$-$\mathrm{f}$-modules.

\begin{prop}\label{prop:3.2}

$\mathrm{(1)}$ If $M$ is a Cohen Macaulay module,~then $M$ is an $(\mathfrak{a},\mathfrak{b})$-$\mathrm{f}$-module.

$\mathrm{(2)}$ If $(R,\mathfrak{m})$ is local, $\mathfrak{a},\mathfrak{b}$ proper ideals of $R$~and $M$ a $\mathrm{f}$-module,~then $M$ is an $(\mathfrak{a},\mathfrak{b})$-$\mathrm{f}$-module.

$\mathrm{(3)}$ If $M$ is an $(\mathfrak{a},\mathfrak{b})$-$\mathrm{f}$-module,~then
$\mathrm{f}\textrm{-}\mathrm{grad}_R(\mathfrak{a},\mathfrak{b},M)=\mathrm{dim}_RM-\mathrm{dim}_{R}(M/\mathfrak{b}M)$.~In particular,
$\mathrm{dim}_{R}(M/\mathfrak{b}M)=\mathrm{dim}_{R}R/\mathfrak{p}$
for all $\mathfrak{p}\in \mathrm{Supp}_R(M/\mathfrak{b}M)\backslash \mathcal{V}(\mathfrak{a})$.

$\mathrm{(4)}$ $(R,\mathfrak{b})$-$\mathrm{f}$-modules are exactly $\mathfrak{b}$-Cohen Macaulay modules.
\end{prop}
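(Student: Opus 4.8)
The plan is to verify each of the four parts directly from the definitions, leaning on Theorem~\ref{thm:2.7} and Proposition~\ref{prop:2.13} wherever possible.

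For part (1), suppose $M$ is Cohen Macaulay and fix $\mathfrak{p}\in\mathrm{Supp}_R(M/\mathfrak{b}M)\backslash\mathcal{V}(\mathfrak{a})$. By Theorem~\ref{thm:2.7},
$\mathrm{f}\textrm{-}\mathrm{grad}_R(\mathfrak{a},\mathfrak{p},M)
=\mathrm{inf}\{\mathrm{depth}_{R_{\mathfrak{q}}}M_{\mathfrak{q}}\mid\mathfrak{q}\in\mathrm{Supp}_R(M/\mathfrak{p}M)\backslash\mathcal{V}(\mathfrak{a})\}$, and since $\mathfrak{p}$ itself lies in $\mathrm{Supp}_R(M/\mathfrak{p}M)\backslash\mathcal{V}(\mathfrak{a})$ (as noted in the proof of Corollary~\ref{cor:2.10}), this infimum is at most $\mathrm{depth}_{R_{\mathfrak{p}}}M_{\mathfrak{p}}$. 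Combined with Proposition~\ref{prop:2.13}, it suffices to show $\mathrm{depth}_{R_{\mathfrak{p}}}M_{\mathfrak{p}}\geq\mathrm{dim}_RM-\mathrm{dim}_RR/\mathfrak{p}$. Because $M$ is Cohen Macaulay, $M_{\mathfrak{p}}$ is Cohen Macaulay over $R_{\mathfrak{p}}$, so $\mathrm{depth}_{R_{\mathfrak{p}}}M_{\mathfrak{p}}=\mathrm{dim}_{R_{\mathfrak{p}}}M_{\mathfrak{p}}$; and for a Cohen Macaulay module one has the dimension formula $\mathrm{dim}_{R_{\mathfrak{p}}}M_{\mathfrak{p}}+\mathrm{dim}_RR/\mathfrak{p}=\mathrm{dim}_RM$ (this is where the catenary/equidimensional behaviour of Cohen Macaulay modules, as in \cite{BH}, is used). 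Hence equality holds and $M$ is an $(\mathfrak{a},\mathfrak{b})$-$\mathrm{f}$-module.

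Part (2) should follow from part (1) together with the observation that over a local ring $(R,\mathfrak{m})$ an $\mathrm{f}$-module (i.e.\ $\mathrm{f}\textrm{-}\mathrm{depth}_R(\mathfrak{m},M)=\mathrm{dim}_RM$, which by Remark~\ref{rem:2.5} equals $\mathrm{depth}_RM=\mathrm{dim}_RM$) is Cohen Macaulay; the properness of $\mathfrak{a},\mathfrak{b}$ only ensures the hypothesis $\mathrm{Supp}_R(M/\mathfrak{b}M)\nsubseteq\mathcal{V}(\mathfrak{a})$ makes sense, so the statement reduces to (1). For part (3), apply the defining equality to the finitely many primes $\mathfrak{p}$ realizing $\mathrm{dim}_R(M/\mathfrak{b}M)$; for any $\mathfrak{p}\in\mathrm{Supp}_R(M/\mathfrak{b}M)\backslash\mathcal{V}(\mathfrak{a})$ we get $\mathrm{f}\textrm{-}\mathrm{grad}_R(\mathfrak{a},\mathfrak{b},M)\leq\mathrm{f}\textrm{-}\mathrm{grad}_R(\mathfrak{a},\mathfrak{p},M)=\mathrm{dim}_RM-\mathrm{dim}_RR/\mathfrak{p}$ by Corollary~\ref{cor:2.8}(2), so $\mathrm{f}\textrm{-}\mathrm{grad}_R(\mathfrak{a},\mathfrak{b},M)\leq\mathrm{dim}_RM-\mathrm{dim}_R(M/\mathfrak{b}M)$ (taking $\mathfrak{p}$ of maximal dimension in the support, which necessarily avoids $\mathcal{V}(\mathfrak{a})$ by the hypothesis). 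The reverse inequality is Corollary~\ref{cor:2.14}(1) applied with the $\mathrm{f}$-grade in place of depth—more precisely, one reruns the argument of Proposition~\ref{prop:2.13} but infimizing over all $\mathfrak{p}$, or invokes Corollary~\ref{cor:2.11}. Equality of the two bounds then forces $\mathrm{dim}_RR/\mathfrak{p}=\mathrm{dim}_R(M/\mathfrak{b}M)$ for every relevant $\mathfrak{p}$, giving the ``in particular'' clause.

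Part (4) is essentially a restatement: when $\mathfrak{a}=R$ we have $\mathcal{V}(\mathfrak{a})=\emptyset$ and, by Remark~\ref{rem:2.5}(1), $\mathrm{f}\textrm{-}\mathrm{grad}_R(R,\mathfrak{p},M)=\mathrm{depth}_R(\mathfrak{p},M)$; then $M$ being an $(R,\mathfrak{b})$-$\mathrm{f}$-module says $\mathrm{depth}_R(\mathfrak{p},M)=\mathrm{dim}_RM-\mathrm{dim}_RR/\mathfrak{p}$ for all $\mathfrak{p}\in\mathrm{Supp}_R(M/\mathfrak{b}M)$, and by part (3) this is equivalent to $\mathrm{depth}_R(\mathfrak{b},M)+\mathrm{dim}_R(M/\mathfrak{b}M)=\mathrm{dim}_RM$, i.e.\ the definition of $\mathfrak{b}$-Cohen Macaulay; conversely one needs that the $\mathfrak{b}$-Cohen Macaulay condition propagates to all $\mathfrak{p}\supseteq\mathfrak{b}$ in the support, which follows from Corollary~\ref{cor:2.11} and the fact that $\mathrm{depth}_R(\mathfrak{p},M)\leq\mathrm{dim}_RM-\mathrm{dim}_RR/\mathfrak{p}$ always holds (Corollary~\ref{cor:2.14}(1)), so the minimum being attained at $\mathfrak{b}$ forces it at each $\mathfrak{p}$. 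The main obstacle I anticipate is part (1): one must be careful that the dimension formula $\mathrm{dim}_{R_{\mathfrak{p}}}M_{\mathfrak{p}}+\mathrm{dim}_RR/\mathfrak{p}=\mathrm{dim}_RM$ genuinely holds for Cohen Macaulay $M$ over a not-necessarily-local, not-necessarily-equidimensional $R$—this requires $\mathfrak{p}\in\mathrm{Supp}_RM$ and the unmixedness of Cohen Macaulay modules, and is the one place where more than formal manipulation is needed.
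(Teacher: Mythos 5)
The central problem is part (2), where you reduce the statement to part (1) by asserting that an $\mathrm{f}$-module over a local ring is Cohen Macaulay. That rests on a misreading of the definition: in the sense of \cite{LT} and \cite{STC}, the $\mathrm{f}$-modules are the \emph{generalized} Cohen Macaulay modules, characterized by $\textrm{f}\textrm{-}\mathrm{depth}_{R}(\mathfrak{p},M)=\mathrm{dim}_RM-\mathrm{dim}_RR/\mathfrak{p}$ for the relevant primes $\mathfrak{p}$, and this class is strictly larger than the Cohen Macaulay one (e.g.\ $M=R/(x)\oplus R/\mathfrak{m}$ over $R=k[[x,y]]$ is an $\mathrm{f}$-module with $\mathrm{depth}_RM=0<1=\mathrm{dim}_RM$). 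Your parenthetical identification of ``$\mathrm{f}$-module'' with $\textrm{f}\textrm{-}\mathrm{depth}_R(\mathfrak{m},M)=\mathrm{depth}_RM=\mathrm{dim}_RM$ also misapplies Remark \ref{rem:2.5}: the remark gives $\textrm{f}\textrm{-}\mathrm{depth}_R(\mathfrak{b},M)=\textrm{f}\textrm{-}\mathrm{grad}_R(\mathfrak{m},\mathfrak{b},M)$, so $\textrm{f}\textrm{-}\mathrm{depth}_R(\mathfrak{m},M)$ would be $\textrm{f}\textrm{-}\mathrm{grad}_R(\mathfrak{m},\mathfrak{m},M)=\infty$, not $\mathrm{depth}_RM$. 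The paper instead uses the defining property directly and sandwiches: $\mathrm{dim}_RM-\mathrm{dim}_RR/\mathfrak{p}=\textrm{f}\textrm{-}\mathrm{depth}_R(\mathfrak{p},M)\leq\textrm{f}\textrm{-}\mathrm{grad}_R(\mathfrak{a},\mathfrak{p},M)$ by Corollary \ref{cor:2.8}(1) (as $\mathfrak{a}\subseteq\mathfrak{m}$), while $\textrm{f}\textrm{-}\mathrm{grad}_R(\mathfrak{a},\mathfrak{p},M)\leq\mathrm{dim}_RM-\mathrm{dim}_RR/\mathfrak{p}$ by Proposition \ref{prop:2.13}. Part (2) needs to be rewritten this way; it does not follow from (1).

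There are two further gaps. In part (1) your inequalities point the wrong way: you establish two \emph{upper} bounds for $\textrm{f}\textrm{-}\mathrm{grad}_R(\mathfrak{a},\mathfrak{p},M)$ (that the infimum in Theorem \ref{thm:2.7} is $\leq\mathrm{depth}_{R_\mathfrak{p}}M_\mathfrak{p}$, and Proposition \ref{prop:2.13}) and then prove a \emph{lower} bound for $\mathrm{depth}_{R_\mathfrak{p}}M_\mathfrak{p}$, which yields no lower bound for the infimum. The repair is to apply the Cohen Macaulay dimension formula to every $\mathfrak{q}$ in the index set $\mathrm{Supp}_R(M/\mathfrak{p}M)\setminus\mathcal{V}(\mathfrak{a})$ (each of which contains $\mathfrak{p}$), giving $\mathrm{depth}_{R_\mathfrak{q}}M_\mathfrak{q}=\mathrm{dim}_RM-\mathrm{dim}_RR/\mathfrak{q}\geq\mathrm{dim}_RM-\mathrm{dim}_RR/\mathfrak{p}$; with that fix your route through Theorem \ref{thm:2.7} is a legitimate alternative to the paper's, which instead lifts a system of parameters of $M_\mathfrak{p}$ to a maximal $\mathfrak{a}$-filter regular $M$-sequence via Proposition \ref{prop:2.1}(3). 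In part (3), the claim that a prime of $\mathrm{Supp}_R(M/\mathfrak{b}M)$ of maximal coheight ``necessarily avoids $\mathcal{V}(\mathfrak{a})$ by the hypothesis'' is unjustified --- the hypothesis only guarantees that \emph{some} prime of the support avoids $\mathcal{V}(\mathfrak{a})$ --- and Corollary \ref{cor:2.14}(1) is an upper bound on depth, so it cannot supply the reverse inequality. The paper avoids both issues by taking a maximal $\mathfrak{a}$-filter regular sequence $x_1,\dots,x_r$ in $\mathfrak{b}$, noting it is part of a system of parameters, and computing $\mathrm{dim}_R(M/\mathfrak{b}M)=\mathrm{dim}_R(M/(x_1,\dots,x_r)M)=\mathrm{dim}_RM-r$ directly.
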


\begin{proof}
$\mathrm{(1)}$ Since $M$ is a Cohen Macaulay $R$-module, $M_{\mathfrak{p}}$ is a Cohen Macaulay $R_{\mathfrak{p}}$-module for all $\mathfrak{p}\in \mathrm{Supp}_R(M/\mathfrak{b}M)\backslash \mathcal{V}(\mathfrak{a})$.~Set $\mathrm{depth}_{R_{\mathfrak{p}}}M_{\mathfrak{p}}=\mathrm{dim}_{R_{\mathfrak{p}}}M_{\mathfrak{p}}=d$, and let $x_{1}/1,\cdots,x_{d}/1$ be a system of parameters for $M_{\mathfrak{p}}$. Then it is a maximal $M_{\mathfrak{p}}$-regular sequence.~Thus $x_{1},\cdots,x_{d}$ is a maximal $\mathfrak{a}$-filter regular $M$-sequence in $\mathfrak{p}$ by Proposition \ref{prop:2.1}.~So $\mathrm{f}\textrm{-}\mathrm{grad}_R(\mathfrak{a},\mathfrak{p},M)=d=
\mathrm{dim}_{R_{\mathfrak{p}}}M_{\mathfrak{p}}=\mathrm{dim}_{R}M-\mathrm{dim}_{R}R/\mathfrak{p}$, as desired.

$\mathrm{(2)}$ Since $M$ is a f-module,~$\mathrm{f}\textrm{-}\mathrm{depth}_{R}(\mathfrak{p},M)=\mathrm{dim}_{R}M-\mathrm{dim}_{R}R/\mathfrak{p}$ for all $\mathfrak{p}\in \mathrm{Supp}_R(M/\mathfrak{b}M)\backslash \\\mathcal{V}(\mathfrak{m})$.~While $\mathrm{f}\textrm{-}\mathrm{grad}_R(\mathfrak{a},\mathfrak{p},M)\geq \mathrm{f}\textrm{-}\mathrm{depth}_{R}(\mathfrak{p},M)$ by Corollary \ref{cor:2.10} and $\mathrm{f}\textrm{-}\mathrm{grad}_R(\mathfrak{a},\mathfrak{p},M)\leq \mathrm{dim}_{R}M-\mathrm{dim}_{R}R/\mathfrak{p}$ for all $\mathfrak{p}\in \mathrm{Supp}_R(M/\mathfrak{b}M)\backslash \mathcal{V}(\mathfrak{a})$.~Hence $M$ is $(\mathfrak{a},\mathfrak{b})$-$\mathrm{f}$-module.

$\mathrm{(3)}$ Suppose that $x_{1},\cdots,x_{r}$ is a maximal $\mathfrak{a}$-filter regular sequence in $\mathfrak{b}$.~Then it is a part of system of parameters for $M$,~and hence
\begin{center}$\begin{aligned}
\mathrm{dim}_{R}(M/\mathfrak{b}M)
&= \mathrm{dim}_{R}(M/(x_{1},\cdots,x_{r})M)\\
&= \mathrm{dim}_{R}M-r\\
&= \mathrm{dim}_{R}M-\mathrm{f}\textrm{-}\mathrm{grad}_R(\mathfrak{a},\mathfrak{p},M)
\end{aligned}$\end{center}
for all $\mathfrak{p}\in\mathrm{Supp}_R(M/\mathfrak{b}M)\backslash \mathcal{V}(\mathfrak{a})$.~By Corollary \ref{cor:2.11}
$$\mathrm{f}\textrm{-}\mathrm{grad}_R(\mathfrak{a},\mathfrak{b},M)=
\mathrm{inf}\{\mathrm{f}\textrm{-}\mathrm{grad}_R(\mathfrak{a},\mathfrak{p},M)\hspace{0.03cm}|\hspace{0.03cm}
\mathfrak{p}\in \mathcal{V}(\mathfrak{b})\},$$
so $\mathrm{f}\textrm{-}\mathrm{grad}_R(\mathfrak{a},\mathfrak{b},M)=\mathrm{dim}_{R}M-\mathrm{dim}_{R}(M/\mathfrak{b}M)$.~Note that for any $\mathfrak{p}\in\mathrm{Supp}_R(M/\mathfrak{b}M)\backslash \mathcal{V}(\mathfrak{a})$,
$$\mathrm{f}\textrm{-}\mathrm{grad}_R(\mathfrak{a},\mathfrak{p},M)=\mathrm{dim}_{R}M-\mathrm{dim}_{R}R/\mathfrak{p},$$
it follows that $\mathrm{dim}_{R}(M/\mathfrak{b}M)=\mathrm{dim}_{R}R/\mathfrak{p}$.

$\mathrm{(4)}$ This follows from \cite{MA}.
\end{proof}

\begin{prop}\label{prop:3.3} If $M$ is an $(\mathfrak{a},\mathfrak{b})$-$\mathrm{f}$-module,~then $M_{\mathfrak{p}}$ is a Cohen Macaulay module for all $\mathfrak{p}\in \mathrm{Supp}_R(M/\mathfrak{b}M)\backslash \mathcal{V}(\mathfrak{a})$.
\end{prop}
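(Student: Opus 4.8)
The plan is to fix an arbitrary prime $\mathfrak{p}\in\mathrm{Supp}_R(M/\mathfrak{b}M)\setminus\mathcal{V}(\mathfrak{a})$ and prove the single equality $\depth_{R_\mathfrak{p}}M_\mathfrak{p}=\dim_{R_\mathfrak{p}}M_\mathfrak{p}$, which is exactly the Cohen--Macaulay condition for the nonzero $R_\mathfrak{p}$-module $M_\mathfrak{p}$ (it is nonzero since $\mathfrak{p}\in\mathrm{Supp}_RM$). The idea is to squeeze both $\depth_{R_\mathfrak{p}}M_\mathfrak{p}$ and $\dim_{R_\mathfrak{p}}M_\mathfrak{p}$ between the single number $r:=\mathrm{f}\textrm{-}\mathrm{grad}_R(\mathfrak{a},\mathfrak{p},M)$, which by Definition~\ref{df:3.1} equals $\dim_RM-\dim_RR/\mathfrak{p}$.

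First I would establish $\depth_{R_\mathfrak{p}}M_\mathfrak{p}\ge r$. As noted in the proof of Corollary~\ref{cor:2.10}, $\mathrm{Supp}_R(M/\mathfrak{p}M)=\mathrm{Supp}_RM\cap\mathcal{V}(\mathfrak{p})$ contains $\mathfrak{p}$, and $\mathfrak{p}\notin\mathcal{V}(\mathfrak{a})$, so $\mathfrak{p}$ lies in the index set of the infimum furnished by Theorem~\ref{thm:2.7} applied to the ideal $\mathfrak{p}$ (in particular that infimum is not vacuous). Hence $\depth_{R_\mathfrak{p}}M_\mathfrak{p}$ is bounded below by that infimum, which equals $r$. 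Alternatively, one could choose a maximal $\mathfrak{a}$-filter regular $M$-sequence $x_1,\dots,x_r$ in $\mathfrak{p}$ and invoke Proposition~\ref{prop:2.1}(3): its image in $R_\mathfrak{p}$ is a weak $M_\mathfrak{p}$-regular sequence contained in $\mathfrak{p}R_\mathfrak{p}$, hence an $M_\mathfrak{p}$-regular sequence, giving the same bound.

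Next I would use the standard dimension inequality $\dim_{R_\mathfrak{p}}M_\mathfrak{p}+\dim_RR/\mathfrak{p}\le\dim_RM$ (see \cite[p.\,413]{BH}), which gives $\dim_{R_\mathfrak{p}}M_\mathfrak{p}\le\dim_RM-\dim_RR/\mathfrak{p}=r$. Combining this with the elementary bound $\depth_{R_\mathfrak{p}}M_\mathfrak{p}\le\dim_{R_\mathfrak{p}}M_\mathfrak{p}$, we obtain $r\le\depth_{R_\mathfrak{p}}M_\mathfrak{p}\le\dim_{R_\mathfrak{p}}M_\mathfrak{p}\le r$, so all three quantities coincide and $M_\mathfrak{p}$ is Cohen--Macaulay.

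I do not anticipate a genuine obstacle, since every step is a direct appeal to a result already in hand; the only points requiring a moment's care are verifying that $\mathfrak{p}$ genuinely belongs to $\mathrm{Supp}_R(M/\mathfrak{p}M)\setminus\mathcal{V}(\mathfrak{a})$ (so that Theorem~\ref{thm:2.7} may legitimately be applied to the ideal $\mathfrak{p}$) and that $M_\mathfrak{p}\neq 0$, so that the Cohen--Macaulay property of $M_\mathfrak{p}$ is meaningful.
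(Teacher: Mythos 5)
Your argument is correct and follows essentially the same route as the paper: both proofs pin $\depth_{R_\mathfrak{p}}M_\mathfrak{p}$ and $\dim_{R_\mathfrak{p}}M_\mathfrak{p}$ against $\dim_RM-\dim_RR/\mathfrak{p}$, getting the depth lower bound by localizing an $\mathfrak{a}$-filter regular sequence (equivalently, Theorem~\ref{thm:2.7}) and the dimension upper bound from the standard inequality of \cite[p.\,413]{BH}. The only cosmetic difference is that you work with $\mathrm{f}\textrm{-}\mathrm{grad}_R(\mathfrak{a},\mathfrak{p},M)$ and sequences in $\mathfrak{p}$, while the paper uses $\mathrm{f}\textrm{-}\mathrm{grad}_R(\mathfrak{a},\mathfrak{b},M)$ and phrases the conclusion via a system of parameters; your sandwich $r\leq\depth_{R_\mathfrak{p}}M_\mathfrak{p}\leq\dim_{R_\mathfrak{p}}M_\mathfrak{p}\leq r$ is, if anything, slightly cleaner.
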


\begin{proof}
Set $\mathrm{f}\textrm{-}\mathrm{grad}_R(\mathfrak{a},\mathfrak{b},M)=r$.~Since $M$ is an $(\mathfrak{a},\mathfrak{b})$-$\mathrm{f}$-module,~we have $\mathrm{ht}_{M}\mathfrak{p}+\mathrm{dim}_{R}R/\mathfrak{p}=\mathrm{dim}_{R}M$ for all $\mathfrak{p}\in \mathrm{Supp}_R(M/\mathfrak{b}M)\backslash \mathcal{V}(\mathfrak{a})$.
So
\begin{center}$\begin{aligned}
\mathrm{ht}_{M}\mathfrak{p}
&= \mathrm{dim}_{R}M-\mathrm{dim}_{R}R/\mathfrak{p}\\
&= \mathrm{f}\textrm{-}\mathrm{grad}_R(\mathfrak{a},\mathfrak{p},M)=r.\end{aligned}$\end{center}
Thus $\mathrm{dim}_{R_{\mathfrak{p}}}M_{\mathfrak{p}}=r$.~Let $x_{1},\cdots,x_{r}$ be a maximal $\mathfrak{a}$-filter regular $M$-sequence in $\mathfrak{b}$.~Then it is a part of system of parameters for $M$.~So $x_{1}/1,\cdots,x_{r}/1$ is a system of parameters for $M_{\mathfrak{p}}$.~While $x_{1}/1,\cdots,x_{r}/1$ is a $M_{\mathfrak{p}}$-regular sequence by Proposition \ref{prop:2.1},~thus $M_{\mathfrak{p}}$ is a Cohen Macaulay module, as claimed.
\end{proof}

\begin{prop}\label{prop:3.3} Let $\mathfrak{a},\mathfrak{a}',\mathfrak{b}$ be ideals of $R$ with $\mathfrak{a}'\subseteq\mathfrak{a}$ and $M$ an $R$-module.~If $M$ is an $(\mathfrak{a},\mathfrak{b})$-$\mathrm{f}$-module,~then $M$ is an $(\mathfrak{a}',\mathfrak{b})$-$\mathrm{f}$-module.
\end{prop}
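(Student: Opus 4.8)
The plan is to verify Definition~\ref{df:3.1} for the pair $(\mathfrak{a}',\mathfrak{b})$ directly, testing the required equality one prime at a time and deducing it from the equality already known for $(\mathfrak{a},\mathfrak{b})$ by a short sandwich estimate. First I would record that $\mathfrak{a}'\subseteq\mathfrak{a}$ forces $\mathcal{V}(\mathfrak{a})\subseteq\mathcal{V}(\mathfrak{a}')$, whence
$$\mathrm{Supp}_R(M/\mathfrak{b}M)\backslash\mathcal{V}(\mathfrak{a}')\subseteq\mathrm{Supp}_R(M/\mathfrak{b}M)\backslash\mathcal{V}(\mathfrak{a});$$
thus every prime that has to be tested for the $(\mathfrak{a}',\mathfrak{b})$-$\mathrm{f}$-module property is already one for which the hypothesis on $M$ supplies the value of $\mathrm{f}\textrm{-}\mathrm{grad}$.

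Next I would fix $\mathfrak{p}\in\mathrm{Supp}_R(M/\mathfrak{b}M)\backslash\mathcal{V}(\mathfrak{a}')$; by the inclusion above $\mathfrak{p}\in\mathrm{Supp}_R(M/\mathfrak{b}M)\backslash\mathcal{V}(\mathfrak{a})$ as well, so since $M$ is an $(\mathfrak{a},\mathfrak{b})$-$\mathrm{f}$-module we have $\mathrm{f}\textrm{-}\mathrm{grad}_R(\mathfrak{a},\mathfrak{p},M)=\mathrm{dim}_RM-\mathrm{dim}_RR/\mathfrak{p}$. Since $\mathfrak{a}'\subseteq\mathfrak{a}$, Corollary~\ref{cor:2.8}(1) gives $\mathrm{f}\textrm{-}\mathrm{grad}_R(\mathfrak{a},\mathfrak{p},M)\leq\mathrm{f}\textrm{-}\mathrm{grad}_R(\mathfrak{a}',\mathfrak{p},M)$, while Proposition~\ref{prop:2.13} applied to the ideals $\mathfrak{a}'$ and $\mathfrak{p}$ gives $\mathrm{f}\textrm{-}\mathrm{grad}_R(\mathfrak{a}',\mathfrak{p},M)\leq\mathrm{dim}_RM-\mathrm{dim}_RR/\mathfrak{p}$ — its support hypothesis $\mathrm{Supp}_R(M/\mathfrak{p}M)\not\subseteq\mathcal{V}(\mathfrak{a}')$ holds because $\mathfrak{p}\in\mathrm{Supp}_R(M/\mathfrak{p}M)$ (indeed $\mathfrak{p}\in\mathrm{Supp}_R(M/\mathfrak{b}M)\subseteq\mathrm{Supp}_RM$, so $M_{\mathfrak{p}}/\mathfrak{p}M_{\mathfrak{p}}\neq0$) while $\mathfrak{p}\notin\mathcal{V}(\mathfrak{a}')$. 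Concatenating the three relations,
$$\mathrm{dim}_RM-\mathrm{dim}_RR/\mathfrak{p}=\mathrm{f}\textrm{-}\mathrm{grad}_R(\mathfrak{a},\mathfrak{p},M)\leq\mathrm{f}\textrm{-}\mathrm{grad}_R(\mathfrak{a}',\mathfrak{p},M)\leq\mathrm{dim}_RM-\mathrm{dim}_RR/\mathfrak{p},$$
so $\mathrm{f}\textrm{-}\mathrm{grad}_R(\mathfrak{a}',\mathfrak{p},M)=\mathrm{dim}_RM-\mathrm{dim}_RR/\mathfrak{p}$. As $\mathfrak{p}$ was arbitrary in $\mathrm{Supp}_R(M/\mathfrak{b}M)\backslash\mathcal{V}(\mathfrak{a}')$, this is exactly the condition for $M$ to be an $(\mathfrak{a}',\mathfrak{b})$-$\mathrm{f}$-module.

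The step I would flag as the main (essentially the only) delicate point is the prerequisite $\mathrm{Supp}_R(M/\mathfrak{b}M)\not\subseteq\mathcal{V}(\mathfrak{a}')$ that Definition~\ref{df:3.1} demands of the pair $(\mathfrak{a}',\mathfrak{b})$: shrinking $\mathfrak{a}$ to $\mathfrak{a}'$ enlarges $\mathcal{V}$, so this does not follow from $\mathrm{Supp}_R(M/\mathfrak{b}M)\not\subseteq\mathcal{V}(\mathfrak{a})$ and must be carried as a hypothesis (or understood as vacuous when it fails). With that in hand, the remaining argument is a routine combination of the monotonicity in Corollary~\ref{cor:2.8}(1) with the dimension bound in Proposition~\ref{prop:2.13}, and I expect no difficulty beyond keeping track of the support conditions.
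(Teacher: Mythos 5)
Your proof is correct and takes essentially the same route as the paper's: it sandwiches $\mathrm{f}\textrm{-}\mathrm{grad}_R(\mathfrak{a}',\mathfrak{p},M)$ between the monotonicity lower bound of Corollary~\ref{cor:2.8}(1) and the dimension upper bound of Proposition~\ref{prop:2.13}. You are in fact slightly more careful than the paper, which runs the sandwich over all $\mathfrak{p}\in\mathrm{Supp}_R(M/\mathfrak{b}M)\backslash \mathcal{V}(\mathfrak{a})$ rather than isolating the required (smaller) test set $\mathrm{Supp}_R(M/\mathfrak{b}M)\backslash \mathcal{V}(\mathfrak{a}')$, and which does not comment on the precondition $\mathrm{Supp}_R(M/\mathfrak{b}M)\not\subseteq \mathcal{V}(\mathfrak{a}')$ from Definition~\ref{df:3.1} that you rightly flag.
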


\begin{proof}
Since $M$ is an $(\mathfrak{a},\mathfrak{b})$-$\mathrm{f}$-module, it follows that
$$\mathrm{f}\textrm{-}\mathrm{grad}_R(\mathfrak{a},\mathfrak{p},M)=\mathrm{dim}_RM-\mathrm{dim}_{R}R/\mathfrak{p}$$
for any $\mathfrak{p}\in\mathrm{Supp}_R(M/\mathfrak{b}M)\backslash \mathcal{V}(\mathfrak{a})$.
As $\mathfrak{a}'\subseteq\mathfrak{a}$,
$\mathrm{f}\textrm{-}\mathrm{grad}_R(\mathfrak{a}',\mathfrak{p},M)
\geq \mathrm{f}\textrm{-}\mathrm{grad}_R(\mathfrak{a},\mathfrak{b},M)
=\mathrm{dim}_RM-\mathrm{dim}_{R}R/\mathfrak{p}$ by Corollary \ref{cor:2.8}.~Note that
$$\mathrm{f}\textrm{-}\mathrm{grad}_R(\mathfrak{a}',\mathfrak{p},M)\leq \mathrm{dim}_RM-\mathrm{dim}_{R}R/\mathfrak{p}$$
for all $\mathfrak{p}\in\mathrm{Supp}_R(M/\mathfrak{b}M)\backslash \mathcal{V}(\mathfrak{a})$ by Proposition \ref{prop:2.13}.~Hence,~$M$ is an $(\mathfrak{a}',\mathfrak{b})$-$\mathrm{f}$-module.
\end{proof}

\begin{lem}\label{lem:3.4} Let~$M$ be an $R$-module and $x\in \mathfrak{b}$ be $\mathfrak{a}$-filter regular element.~Then $M$ is an $(\mathfrak{a},\mathfrak{b})$-$\mathrm{f}$-module if and only if $M/xM$ is an $(\mathfrak{a},\mathfrak{b}/(x))$-$\mathrm{f}$-module over $R/xR$.
\end{lem}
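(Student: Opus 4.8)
The plan is to pass to the quotient ring $\overline{R}=R/xR$ and translate the whole situation there; throughout, $\mathfrak{a}$ in the statement is to be read over $R/xR$ as its image $\overline{\mathfrak{a}}=(\mathfrak{a}+xR)/xR$. Write $\overline{M}=M/xM$ and $\overline{\mathfrak{b}}=\mathfrak{b}/(x)$, and for a prime $\mathfrak{p}\supseteq xR$ put $\overline{\mathfrak{p}}=\mathfrak{p}/xR$. First I would record the elementary dictionary. Since $x\in\mathfrak{b}$ we have $\overline{M}/\overline{\mathfrak{b}}\,\overline{M}=M/\mathfrak{b}M$ and $\mathrm{Supp}_R(M/\mathfrak{b}M)\subseteq\mathcal{V}(\mathfrak{b})\subseteq\mathcal{V}(xR)$, so $\mathfrak{p}\mapsto\overline{\mathfrak{p}}$ is an inclusion-preserving bijection from $\mathrm{Supp}_R(M/\mathfrak{b}M)$ onto $\mathrm{Supp}_{\overline{R}}(\overline{M}/\overline{\mathfrak{b}}\,\overline{M})$; it carries $\mathcal{V}_R(\mathfrak{a})$ onto $\mathcal{V}_{\overline{R}}(\overline{\mathfrak{a}})$ and satisfies $\dim_{\overline{R}}\overline{R}/\overline{\mathfrak{p}}=\dim_R R/\mathfrak{p}$. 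Hence $\mathrm{Supp}_R(M/\mathfrak{b}M)\nsubseteq\mathcal{V}(\mathfrak{a})$ holds iff $\mathrm{Supp}_{\overline{R}}(\overline{M}/\overline{\mathfrak{b}}\,\overline{M})\nsubseteq\mathcal{V}(\overline{\mathfrak{a}})$, so the standing hypothesis of Definition \ref{df:3.1} is present on one side exactly when it is on the other; and whenever it is present, the discussion following Definition \ref{df:3.1} gives $\dim_{\overline{R}}\overline{M}=\dim_R(M/xM)=\dim_RM-1$.

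The key step is the grade identity
$$\textrm{f}\textrm{-}\mathrm{grad}_R(\mathfrak{a},\mathfrak{p},M)=1+\textrm{f}\textrm{-}\mathrm{grad}_{\overline{R}}(\overline{\mathfrak{a}},\overline{\mathfrak{p}},\overline{M})\qquad\text{for }\mathfrak{p}\in\mathrm{Supp}_R(M/\mathfrak{b}M)\setminus\mathcal{V}(\mathfrak{a}).$$
For such a $\mathfrak{p}$ we have $x\in\mathfrak{b}\subseteq\mathfrak{p}$ and, since $\mathrm{Supp}_R(M/\mathfrak{p}M)=\mathrm{Supp}_RM\cap\mathcal{V}(\mathfrak{p})$, also $\mathfrak{p}\in\mathrm{Supp}_R(M/\mathfrak{p}M)\setminus\mathcal{V}(\mathfrak{a})$; thus $\textrm{f}\textrm{-}\mathrm{grad}_R(\mathfrak{a},\mathfrak{p},M)$ is the length of any maximal $\mathfrak{a}$-filter regular $M$-sequence in $\mathfrak{p}$. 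Because $x$ is an $\mathfrak{a}$-filter regular element lying in $\mathfrak{p}$, I would take such a maximal sequence of the form $x,x_{2},\dots,x_{r}$; then, directly from the definition of a filter regular sequence, $x_{2},\dots,x_{r}$ is a maximal $\mathfrak{a}$-filter regular $(M/xM)$-sequence in $\mathfrak{p}$. The remaining point is that, for the module $M/xM$, which is already annihilated by $x$, a sequence in $\mathfrak{p}$ is an $\mathfrak{a}$-filter regular $(M/xM)$-sequence over $R$ if and only if its image in $\overline{\mathfrak{p}}$ is an $\overline{\mathfrak{a}}$-filter regular $\overline{M}$-sequence over $\overline{R}$, and similarly for maximality: the successive colon subquotients appearing in the two definitions are literally the same modules, and for a module killed by $x$ the $R$-support matches the $\overline{R}$-support with $\mathcal{V}_R(\mathfrak{a})$ matching $\mathcal{V}_{\overline{R}}(\overline{\mathfrak{a}})$. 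So $\textrm{f}\textrm{-}\mathrm{grad}_{\overline{R}}(\overline{\mathfrak{a}},\overline{\mathfrak{p}},\overline{M})=r-1$ and the identity follows. (An alternative derivation uses Theorem \ref{thm:2.7} and Proposition \ref{prop:2.1}: for the primes $\mathfrak{q}$ over which the infimum in Theorem \ref{thm:2.7} is taken, $x/1$ is a nonzerodivisor on $M_{\mathfrak{q}}\neq0$, and depth drops by exactly one modulo it.)

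With the dictionary and the grade identity in place, the equivalence becomes purely formal. Spelling out Definition \ref{df:3.1}, ``$M$ is an $(\mathfrak{a},\mathfrak{b})$-$\mathrm{f}$-module'' says: $\mathrm{Supp}_R(M/\mathfrak{b}M)\nsubseteq\mathcal{V}(\mathfrak{a})$ (which already forces $M\neq0$) and $\textrm{f}\textrm{-}\mathrm{grad}_R(\mathfrak{a},\mathfrak{p},M)=\dim_RM-\dim_RR/\mathfrak{p}$ for all $\mathfrak{p}\in\mathrm{Supp}_R(M/\mathfrak{b}M)\setminus\mathcal{V}(\mathfrak{a})$. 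Substituting $\textrm{f}\textrm{-}\mathrm{grad}_R(\mathfrak{a},\mathfrak{p},M)=1+\textrm{f}\textrm{-}\mathrm{grad}_{\overline{R}}(\overline{\mathfrak{a}},\overline{\mathfrak{p}},\overline{M})$ and $\dim_RM=\dim_{\overline{R}}\overline{M}+1$, and using the bijection of the first paragraph, this transforms term by term into: $\mathrm{Supp}_{\overline{R}}(\overline{M}/\overline{\mathfrak{b}}\,\overline{M})\nsubseteq\mathcal{V}(\overline{\mathfrak{a}})$ and $\textrm{f}\textrm{-}\mathrm{grad}_{\overline{R}}(\overline{\mathfrak{a}},\overline{\mathfrak{p}},\overline{M})=\dim_{\overline{R}}\overline{M}-\dim_{\overline{R}}\overline{R}/\overline{\mathfrak{p}}$ for all $\overline{\mathfrak{p}}\in\mathrm{Supp}_{\overline{R}}(\overline{M}/\overline{\mathfrak{b}}\,\overline{M})\setminus\mathcal{V}(\overline{\mathfrak{a}})$, i.e. ``$M/xM$ is an $(\mathfrak{a},\mathfrak{b}/(x))$-$\mathrm{f}$-module over $R/xR$'' (note $\overline M\neq 0$ is again subsumed by the support condition, and it in turn forces $M\neq 0$ for the converse). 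The one genuinely delicate point, which I would write out in full, is the transfer of $\mathfrak{a}$-filter regularity, and of the maximality of a filter regular sequence, along $R\to R/xR$ for a module annihilated by $x$; the rest is bookkeeping with supports and Krull dimensions.
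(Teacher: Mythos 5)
Your proposal is correct and follows essentially the same route as the paper's proof: pass to $R/xR$, use the dimension-preserving bijection $\mathfrak{p}\mapsto\mathfrak{p}/(x)$ on the relevant supports, the drop $\dim_R(M/xM)=\dim_RM-1$, and the identity $\textrm{f}\textrm{-}\mathrm{grad}_{R/(x)}(\mathfrak{a},\mathfrak{p}/(x),M/xM)=\textrm{f}\textrm{-}\mathrm{grad}_R(\mathfrak{a},\mathfrak{p},M)-1$. The only difference is one of completeness: the paper asserts that grade identity without argument, whereas you actually justify it (via a maximal filter regular sequence beginning with $x$ and the transfer of filter regularity to the quotient ring for a module killed by $x$), which is a strictly more detailed version of the same proof.
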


\begin{proof}
Note that for all $\mathfrak{p}\in\mathrm{Supp}_R(M/\mathfrak{b}M)\backslash \mathcal{V}(\mathfrak{a})$,~there is an isomorphism $R'/\mathfrak{p}'\cong R/\mathfrak{p}$,~where $R'=R/(x)$ and $\mathfrak{p}'=\mathfrak{p}/(x)$.
~So~the proof follows from Definition \ref{df:3.1} and the next equality
\begin{center}
$\textrm{f}\textrm{-}\mathrm{grad}_{R/(x)}(\mathfrak{a},\mathfrak{p}/(x),M/xM)=\mathrm{f}\textrm{-}\mathrm{grad}_R(\mathfrak{a},\mathfrak{p},M)-1$.
\end{center}
This completes the proof.
\end{proof}

\begin{cor}\label{cor:3.5}
Let~$M$ be an $R$-module and $x_1,\cdots,x_r$ an $\mathfrak{a}$-filter regular sequence in $\mathfrak{b}$. Then~$M$ is an $(\mathfrak{a},\mathfrak{b})$-$\mathrm{f}$-module if and only if $M/(x_1,\cdots,x_r)M$ is an $(\mathfrak{a},\mathfrak{b}/(x_1,\cdots,x_r))$-$\mathrm{f}$-module over $R/(x_1,\cdots,x_r)R$.
\end{cor}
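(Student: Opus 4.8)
The plan is to induct on $r$, with Lemma~\ref{lem:3.4} serving both as the base case $r=1$ and as the engine of the inductive step. For $r\geq 2$ write $R'=R/(x_1)$, $M'=M/x_1M$ and $\mathfrak{b}'=\mathfrak{b}/(x_1)$. Since $x_1\in\mathfrak{b}$ there is a canonical isomorphism $M/\mathfrak{b}M\cong M'/\mathfrak{b}'M'$, and under $\mathrm{Spec}(R')\hookrightarrow\mathrm{Spec}(R)$ (whose image is $\mathcal{V}(x_1)$) the set $\mathcal{V}(\mathfrak{a})$ over $R'$ corresponds to $\mathcal{V}(\mathfrak{a})\cap\mathcal{V}(x_1)$ over $R$; hence the standing hypothesis $\mathrm{Supp}_R(M/\mathfrak{b}M)\nsubseteq\mathcal{V}(\mathfrak{a})$ is equivalent to its counterpart over $R'$ (note every prime in $\mathrm{Supp}_R(M/\mathfrak{b}M)$ already contains $x_1$). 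By Lemma~\ref{lem:3.4}, $M$ is an $(\mathfrak{a},\mathfrak{b})$-$\mathrm{f}$-module if and only if $M'$ is an $(\mathfrak{a},\mathfrak{b}')$-$\mathrm{f}$-module over $R'$.

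Next I would check that the images $\overline{x}_2,\dots,\overline{x}_r$ in $R'$ form an $\mathfrak{a}$-filter regular $M'$-sequence in $\mathfrak{b}'$. This is immediate from the definition: for each $i$ the module
\[
\frac{(\overline{x}_2,\dots,\overline{x}_{i-1})M':_{M'}\overline{x}_i}{(\overline{x}_2,\dots,\overline{x}_{i-1})M'}
\]
is canonically identified with $\big((x_1,\dots,x_{i-1})M:_M x_i\big)/(x_1,\dots,x_{i-1})M$, whose large support lies in $\mathcal{V}(\mathfrak{a})$ because $x_1,\dots,x_r$ is $\mathfrak{a}$-filter regular for $M$; the transport of $\mathcal{V}(\mathfrak{a})$ recorded above then gives the support condition over $R'$ as well. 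Applying the induction hypothesis to $M'$ over $R'$ with the sequence $\overline{x}_2,\dots,\overline{x}_r$ of length $r-1$ yields: $M'$ is an $(\mathfrak{a},\mathfrak{b}')$-$\mathrm{f}$-module over $R'$ if and only if $M'/(\overline{x}_2,\dots,\overline{x}_r)M'$ is an $(\mathfrak{a},\mathfrak{b}'/(\overline{x}_2,\dots,\overline{x}_r))$-$\mathrm{f}$-module over $R'/(\overline{x}_2,\dots,\overline{x}_r)$.

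Finally I would unwind the canonical identifications $R'/(\overline{x}_2,\dots,\overline{x}_r)\cong R/(x_1,\dots,x_r)$, $M'/(\overline{x}_2,\dots,\overline{x}_r)M'\cong M/(x_1,\dots,x_r)M$ and $\mathfrak{b}'/(\overline{x}_2,\dots,\overline{x}_r)\cong \mathfrak{b}/(x_1,\dots,x_r)$ (all compatible with the image of $\mathfrak{a}$, using $(x_1,\dots,x_r)\subseteq\mathfrak{b}$), and concatenate the three equivalences to conclude. The argument is essentially bookkeeping once Lemma~\ref{lem:3.4} is available; the only point demanding genuine care --- and hence the main, if modest, obstacle --- is verifying that the $\mathfrak{a}$-filter regularity of the truncated sequence really descends to $R'$ and that $\mathcal{V}(\mathfrak{a})$ transports correctly along $\mathrm{Spec}(R/(x_1))\hookrightarrow\mathrm{Spec}(R)$, so that the induction hypothesis is legitimately applicable at each step.
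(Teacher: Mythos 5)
Your proof is correct and matches the paper's intent exactly: the paper states this corollary without proof precisely because it is the evident induction on $r$ obtained by iterating Lemma~\ref{lem:3.4}, which is what you carry out. The bookkeeping you supply (descent of $\mathfrak{a}$-filter regularity of $\overline{x}_2,\dots,\overline{x}_r$ to $R/(x_1)$ and the transport of $\mathcal{V}(\mathfrak{a})$ along $\mathrm{Spec}(R/(x_1))\hookrightarrow\mathrm{Spec}(R)$) is accurate and is the only point that needed checking.
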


An ideal $\mathfrak{a}$ of $R$ is said to be primary if $ab\in \mathfrak{a}$ and $a\not\in \mathfrak{a}$ implies that $b^n\in \mathfrak{a}$ for some $n\geq1$.~If $\mathfrak{a}$ is a primary ideal and $\mathfrak{p}=\sqrt{\mathfrak{a}}$ the radical of $\mathfrak{a}$,~then $\mathfrak{a}$ is said to be $\mathfrak{p}$-primary.

\begin{prop}\label{prop:3.5} Let~$M$ be an $(\mathfrak{a},\mathfrak{b})$-$\mathrm{f}$-module such that $\mathfrak{p}\in \mathrm{Supp}_{R}M\backslash \mathcal{V}(\mathfrak{a})$.~If $\mathfrak{b}$ is $\mathfrak{p}$-primary,~then
\begin{center}
$\mathrm{dim}_RM=\mathrm{dim}_RR/\mathfrak{q}$ for some $\mathfrak{q}\in (\mathrm{Ass}_RM\backslash \mathcal{V}(\mathfrak{a}))\cap \mathcal{U}(\mathfrak{p})$.
\end{center}
\end{prop}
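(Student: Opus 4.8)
The plan is to exploit the hypothesis that $\mathfrak{b}$ is $\mathfrak{p}$-primary, so that $\mathcal{V}(\mathfrak{b})=\mathcal{V}(\mathfrak{p})$ and hence $\mathrm{Supp}_R(M/\mathfrak{b}M)=\mathrm{Supp}_RM\cap\mathcal{V}(\mathfrak{p})$. Since $\mathfrak{p}\in\mathrm{Supp}_RM\setminus\mathcal{V}(\mathfrak{a})$, the prime $\mathfrak{p}$ itself lies in $\mathrm{Supp}_R(M/\mathfrak{b}M)\setminus\mathcal{V}(\mathfrak{a})$, so the $(\mathfrak{a},\mathfrak{b})$-$\mathrm{f}$-module hypothesis applies at $\mathfrak{p}$ and, by Proposition~\ref{prop:3.2}(3), $\mathrm{dim}_R(M/\mathfrak{b}M)=\mathrm{dim}_RR/\mathfrak{p}$. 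First I would set $r=\mathrm{f}\textrm{-}\mathrm{grad}_R(\mathfrak{a},\mathfrak{b},M)$ and pick a maximal $\mathfrak{a}$-filter regular $M$-sequence $x_1,\cdots,x_r$ in $\mathfrak{b}$; by the remark following Definition~\ref{df:3.1} this is a part of a system of parameters for $M$, so $\mathrm{dim}_R(M/(x_1,\cdots,x_r)M)=\mathrm{dim}_RM-r$ and, combining with Proposition~\ref{prop:3.2}(3), we recover $\mathrm{dim}_RR/\mathfrak{p}=\mathrm{dim}_RM-r$.

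Next I would descend to the quotient. Set $N=M/(x_1,\cdots,x_r)M$ over $\bar R=R/(x_1,\cdots,x_r)R$; by Corollary~\ref{cor:3.5} this is an $(\mathfrak{a},\mathfrak{b}/(x_1,\cdots,x_r))$-$\mathrm{f}$-module, and now $\mathfrak{b}/(x_1,\cdots,x_r)$ is $\bar R$-nilpotent modulo $\bar{\mathfrak{p}}$ with $\mathrm{f}\textrm{-}\mathrm{grad}_{\bar R}(\mathfrak{a},\bar{\mathfrak{b}},N)=0$. The vanishing of the $\mathfrak{a}$-filter grade means, as in the $r=0$ case of the proof of Proposition~\ref{prop:2.6}, that $\bar{\mathfrak{b}}\subseteq\bar{\mathfrak{q}}$ for some $\bar{\mathfrak{q}}\in\mathrm{Ass}_{\bar R}N\setminus\mathcal{V}(\mathfrak{a})$; pulling back gives a prime $\mathfrak{q}\in\mathrm{Ass}_RM$ (using that $x_1,\cdots,x_r$ is a part of a system of parameters, associated primes of the quotient that avoid $\mathcal{V}(\mathfrak{a})$ come from associated primes of $M$ — this is the content of Proposition~\ref{prop:2.1}, since after localizing at such a prime the $x_i$ form a regular sequence) with $\mathfrak{q}\supseteq(x_1,\cdots,x_r)$, $\mathfrak{q}\notin\mathcal{V}(\mathfrak{a})$, and $\mathfrak{b}\subseteq\mathfrak{q}$, i.e. $\mathfrak{p}=\sqrt{\mathfrak{b}}\subseteq\mathfrak{q}$, so $\mathfrak{q}\in\mathcal{U}(\mathfrak{p})$ — wait, $\mathcal{U}(\mathfrak{p})$ is defined as $\{\mathfrak{u}\mid\mathfrak{u}\subseteq\mathfrak{p}\}$, so in fact $\mathfrak{p}\subseteq\mathfrak{q}$ forces $\mathfrak{q}=\mathfrak{p}$ unless I have the containment backwards; I would instead argue that $\mathfrak{q}\subseteq\mathfrak{p}$ by noting $\mathfrak{q}$ is a minimal prime of $(x_1,\cdots,x_r)M$ contained in $\mathfrak{p}$, since $\mathfrak{p}$ itself contains $(x_1,\cdots,x_r)$ and the $x_i$ being part of a system of parameters keeps heights controlled.

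Finally I would compute the dimension: for the prime $\mathfrak{q}\in(\mathrm{Ass}_RM\setminus\mathcal{V}(\mathfrak{a}))\cap\mathcal{U}(\mathfrak{p})$ just produced, $\mathfrak{q}\subseteq\mathfrak{p}$ gives $\mathrm{dim}_RR/\mathfrak{q}\geq\mathrm{dim}_RR/\mathfrak{p}$, while $\mathfrak{q}\in\mathrm{Ass}_RM\subseteq\mathrm{Supp}_RM$ gives $\mathrm{dim}_RR/\mathfrak{q}\leq\mathrm{dim}_RM$; to get equality I would use that $x_1,\cdots,x_r$ is part of a system of parameters together with the $(\mathfrak{a},\mathfrak{b})$-$\mathrm{f}$-module equality $\mathrm{dim}_RM-\mathrm{dim}_RR/\mathfrak{p}=r$, and the fact that $\mathfrak{q}$ is obtained from an associated prime of $N=M/(x_1,\cdots,x_r)M$, so $\mathrm{dim}_RR/\mathfrak{q}\leq\mathrm{dim}_R N=\mathrm{dim}_RM-r=\mathrm{dim}_RR/\mathfrak{p}$, forcing $\mathrm{dim}_RR/\mathfrak{q}=\mathrm{dim}_RR/\mathfrak{p}=\mathrm{dim}_RM-r$; but the claim demands $\mathrm{dim}_RM=\mathrm{dim}_RR/\mathfrak{q}$, which would need $r=0$. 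The main obstacle — and the point I would need to be most careful about — is reconciling this: I expect the correct reading is that one must choose $\mathfrak{q}$ a \emph{minimal} associated prime, and then invoke that $M_\mathfrak{q}$ Cohen--Macaulay (Proposition~\ref{prop:3.3}) plus the dimension formula through $\mathfrak{q}$ forces $\mathrm{ht}_M\mathfrak{q}=0$, so that $\mathrm{dim}_RR/\mathfrak{q}=\mathrm{dim}_RM$ after all; sorting out exactly which associated prime has this property, and whether the displayed formula in Proposition~\ref{prop:3.2}(3) together with catenarity of $\mathrm{Supp}_RM$ forces $r=0$ here, is where the real work lies.
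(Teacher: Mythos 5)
There is a genuine gap, and you have in fact located it yourself without resolving it: your candidate prime $\mathfrak{q}$ is taken to contain $(x_1,\cdots,x_r)$ (it is pulled back from an associated prime of $N=M/(x_1,\cdots,x_r)M$), and as your own computation shows this forces $\dim_RR/\mathfrak{q}\leq\dim_RN=\dim_RM-r$, which contradicts the desired conclusion whenever $r>0$. The statement is \emph{not} asking for $r=0$; the prime $\mathfrak{q}$ you need does not contain $\mathfrak{b}$ at all (this is also why your worry about $\mathcal{U}(\mathfrak{p})$ versus $\mathcal{V}(\mathfrak{p})$ arose: the correct $\mathfrak{q}$ sits \emph{below} $\mathfrak{p}$, not above it). The missing idea is the descent mechanism that the paper's induction supplies: if $x$ is an $\mathfrak{a}$-filter regular element and $\mathfrak{q}_1\in\mathrm{Ass}_R(M/xM)\setminus\mathcal{V}(\mathfrak{a})$, then $x/1$ is a nonzerodivisor on $M_{\mathfrak{q}_1}$ by Proposition~\ref{prop:2.1}, so $x$ lies in no associated prime of $M$ contained in $\mathfrak{q}_1$; choosing such an associated prime $\mathfrak{q}$ (one exists since $M_{\mathfrak{q}_1}\neq0$) gives a \emph{strict} inclusion $\mathfrak{q}\subsetneq\mathfrak{q}_1$, hence $\dim_RR/\mathfrak{q}\geq 1+\dim_RR/\mathfrak{q}_1$. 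This gain of one unit per filter-regular element exactly offsets the drop $\dim_R(M/xM)=\dim_RM-1$, and iterating (the paper does this by induction on $r=\mathrm{f}\textrm{-}\mathrm{grad}_R(\mathfrak{a},\mathfrak{b},M)$, with the base case $r=0$ handled essentially as you describe via the $\mathfrak{p}$-primary hypothesis) yields $\dim_RR/\mathfrak{q}\geq\dim_RM$, whence equality since $\mathfrak{q}\in\mathrm{Supp}_RM$, together with $\mathfrak{q}\subseteq\mathfrak{q}_1\subseteq\cdots\subseteq\mathfrak{p}$ and $\mathfrak{q}\notin\mathcal{V}(\mathfrak{a})$.

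Your closing speculation (choosing a minimal associated prime and invoking Cohen--Macaulayness of $M_{\mathfrak{q}}$ via Proposition~\ref{prop:3.3}, or arguing that $r$ must be $0$) does not repair this: Proposition~\ref{prop:3.3} only applies to primes in $\mathrm{Supp}_R(M/\mathfrak{b}M)\setminus\mathcal{V}(\mathfrak{a})$, and a minimal prime of $\mathrm{Supp}_RM$ of maximal coheight need not contain $\mathfrak{b}$. The fix is not a different choice among the primes you constructed but the replacement of ``pull back an associated prime of $N$'' by ``descend strictly below it,'' one filter-regular element at a time.
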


\begin{proof}
If $\mathrm{f}\textrm{-}\mathrm{grad}_R(\mathfrak{a},\mathfrak{b},M)=0$,~then $\mathrm{f}\textrm{-}\mathrm{grad}_R(\mathfrak{a},\mathfrak{p},M)=0$.~While $\mathfrak{p}\in \mathrm{Supp}_{R}M\backslash \mathcal{V}(\mathfrak{a})$,~so $\mathfrak{q}\subseteq \mathfrak{p}$ for some $\mathfrak{q}\in \mathrm{Ass}_RM\backslash \mathcal{V}(\mathfrak{a})$.~Note
that $\mathfrak{p}=\sqrt{\mathfrak{b}}$,~hence $\mathfrak{p}\in \mathrm{Supp}_R(M/\mathfrak{b}M)\backslash \mathcal{V}(\mathfrak{a})$ and $\mathfrak{p}$ is the smallest prime ideal containing $\mathfrak{b}$ by \cite[Proposition 4.1]{AM},~it follows that $\mathrm{dim}_RM= \mathrm{dim}_R(M/\mathfrak{b}M)=\mathrm{dim}_RR/\mathfrak{p}$ and $\mathfrak{p}=\mathfrak{q}$.~Now suppose $\mathrm{f}\textrm{-}\mathrm{grad}_R(\mathfrak{a},\mathfrak{b},M)>0$.~Then there is an $\mathfrak{a}$-filter regular element $x\in \mathfrak{b}$.~Set $\bar{M}=M/xM$.~Then $\bar{M}$ is an $(\mathfrak{a},\mathfrak{b}/(x))$-$\mathrm{f}$-module by Lemma \ref{lem:3.4}.~By induction,~$\mathrm{dim}_{R}\bar{M}=\mathrm{dim}_{R}R/\mathfrak{q}_{1}$ for some $\mathfrak{q}_{1}\in (\mathrm{Ass}_{R}\bar{M}\backslash \mathcal{V}(\mathfrak{a})) \cap \mathcal{U}(\mathfrak{p})$.~Then $\mathfrak{q}_{1}\in \mathrm{Supp}_{R}\bar{M}\backslash \mathcal{V}(\mathfrak{a})$.~In particular,~$\mathfrak{q}_{1}\in \mathrm{Supp}_{R}M\backslash \mathcal{V}(\mathfrak{a})$ and $x\in \mathfrak{q}_{1}$.~Then there exists $\mathfrak{q}\in\mathrm{Ass}_{R}M\backslash \mathcal{V}(\mathfrak{a})$ such that $\mathfrak{q}\subsetneq \mathfrak{q}_{1}\subseteq \mathfrak{p}$.~Hence $\mathrm{dim}_{R}R/\mathfrak{q}\geq 1+\mathrm{dim}_{R}R/\mathfrak{q}_{1}=1+\mathrm{dim}_{R}\bar{M}=\mathrm{dim}_{R}M$.~This proves that $\mathrm{dim}_{R}M=\mathrm{dim}_{R}R/\mathfrak{q}$,~where $\mathfrak{q}\in (\mathrm{Ass}_{R}M \backslash \mathcal{V}(\mathfrak{a}))\cap \mathcal{U}(\mathfrak{p})$.
\end{proof}

\bigskip \centerline {\bf ACKNOWLEDGEMENTS} This research was partially supported by National Natural Science Foundation of China (11761060,11901463), Improvement of Young Teachers$'$ Scientific Research Ability (NWNU-LKQN-18-30) and Innovation Ability Enhancement Project of Gansu Higher Education Institutions (2019A-002).

\bigskip


\begin{thebibliography}{99}
\bibitem{AM} M.F. Atiyah, I.G. Macdonald, \emph{Introduction to commutative algebra}, AddisonWesley Publishing Co., 1969.
\bibitem{BH} W. Bruns, J. Herzog, \emph{Cohen-Macaulay rings}, Cambridge University Press, 1993.
\bibitem{CT} L. Chu, Z. Tang, On the Artinianness of generalized local cohomology, \emph{Comm. Algebra} \textbf{35} (2007) 3821--3827.
\bibitem{FTZ} A. Fathi, A. Tehranian, H. Zakeri, Filter regular sequences and generalized local
cohomology modules, \emph{Bull. Malays. Math. Sci. Soc.} \textbf{38} (2015) 467--482.
\bibitem{GC} Y. Gu, L. Chu, Attached primes of the top generalized local cohomology modules, \emph{Bull. Aust. Math. Soc.} \textbf{79} (2009) 59--67.
\bibitem{IL} S. Iyengar, G.J. Leuschke, A. Leykin, C. Miller, E. Miller, A.K. Singh and U. Walther, \emph{Twenty-four
hours of local cohomology}, AMS Graduate Studies in Mathematics 87, 2007.
\bibitem{K} K. Khashyarmanesh, On the finiteness properties of extension and torsion functors of local cohomology modules, \emph{Proc. Amer. Math. Soc.} \textbf{135} (2007) 1319--1327.
\bibitem{KS} K. Khashyarmanesh, S. Salarian, Filter regular sequences and the finiteness of local cohomology modules, \emph{Comm. Algebra} \textbf{26} (1998) 2483--2490.
\bibitem{LT} R. L$\mathrm{\ddot{u}}$, Z. Tang, The $\mathrm{f}$-depth of an ideal on a module, \emph{Proc. Amer. Math. Soc.} \textbf{130} (2002) 1905--1912.
\bibitem{MA} W. Mahmood, M. Azam, $I$-Cohen Macaulay modules, https://arxiv.org/abs/1906.00143.
\bibitem{NS} U. Nagel, P. Schenzel, Cohomological annihilators and Castelnuovo-Mumford regularity, \emph{Contemp. Math.} \textbf{159} (1994) 307--328.
\bibitem{STC} P. Schenzel, N.V. Trung, N.T. Cuong, Verallgemeinerte Cohen-Macaulay-moduln, \emph{Math. Nachr.} \textbf{85} (1978) 165--176.


\end{thebibliography}
\end{document}